\newtheorem{theorem}{Theorem}
\newtheorem{lemma}{Lemma}
\newtheorem{algorithm}{Algorithm}
\newtheorem{remark}{Remark}
\newtheorem{definition}{Definition}
\newtheorem{corollary}{Corollary}
\newcommand{\rd}{\, \mathrm{d}}
\newcommand{\bszero}{\boldsymbol{0}}
\newcommand{\bsk}{\boldsymbol{k}}
\newcommand{\bsq}{\boldsymbol{q}}
\newcommand{\bsx}{\boldsymbol{x}}
\newcommand{\bsy}{\boldsymbol{y}}
\newcommand{\bsalpha}{\boldsymbol{\alpha}}
\newcommand{\bsbeta}{\boldsymbol{\beta}}
\newcommand{\bsgamma}{\boldsymbol{\gamma}}
\newcommand{\bstau}{\boldsymbol{\tau}}
\newcommand{\FF}{\mathbb{F}}
\newcommand{\NN}{\mathbb{N}}
\newcommand{\RR}{\mathbb{R}}
\newcommand{\ZZ}{\mathbb{Z}}
\newcommand{\grid}{\mathrm{grid}}
\newcommand{\tr}{\mathrm{tr}}
\newcommand{\wal}{\mathrm{wal}}
\newcommand{\wor}{\mathrm{wor}}
\begin{document}

\title{Richardson extrapolation of polynomial lattice rules}
\author{
Josef Dick\thanks{School of Mathematics and Statistics, The University of New South Wales, Sydney, NSW 2052, Australia (\tt{josef.dick@unsw.edu.au})},
Takashi Goda\thanks{Graduate School of Engineering, The University of Tokyo, 7-3-1 Hongo, Bunkyo-ku, Tokyo 113-8656, Japan (\tt{goda@frcer.t.u-tokyo.ac.jp})},
Takehito Yoshiki\thanks{Department of Applied Mathematics and Physics, Graduate School of Informatics, Kyoto University, Kyoto 606-8561, Japan (\tt{yoshiki.takehito.47x@st.kyoto-u.ac.jp})}}
\date{\today}

\maketitle

\begin{abstract}
We study multivariate numerical integration of smooth functions in weighted Sobolev spaces with dominating mixed smoothness $\alpha\geq 2$ defined over the $s$-dimensional unit cube.
We propose a new quasi-Monte Carlo (QMC)-based quadrature rule, named \emph{extrapolated polynomial lattice rule}, which achieves the almost optimal rate of convergence. Extrapolated polynomial lattice rules are constructed in two steps: i) construction of classical polynomial lattice rules over $\FF_b$ with $\alpha$ consecutive sizes of nodes, $b^{m-\alpha+1},\ldots,b^{m}$, and ii) recursive application of Richardson extrapolation to a chain of $\alpha$ approximate values of the integral obtained by consecutive polynomial lattice rules.

We prove the existence of good extrapolated polynomial lattice rules achieving the almost optimal order of convergence of the worst-case error in Sobolev spaces with general weights.
Then, by restricting to product weights, we show that such good extrapolated polynomial lattice rules can be constructed by the fast component-by-component algorithm under a computable quality criterion. 
The required total construction cost is of order $(s+\alpha)N\log N$, which improves the currently known result for interlaced polynomial lattice rule, that is of order $s\alpha N\log N$. We also study the dependence of the worst-case error bound on the dimension. 

A big advantage of our method compared to interlaced polynomial lattice rules is that the fast QMC matrix vector method can be used in this setting, while still achieving  the same rate of convergence. Such a method was previously not known. 

Numerical experiments for test integrands support our theoretical result.\end{abstract}
\noindent
Keywords: quasi-Monte Carlo, polynomial lattice rule, Richardson extrapolation, component-by-component construction\\
Mathematics Subject Classifications: Primary 65C05; Secondary 65D30, 65D32

\section{Introduction}
In this paper we study numerical integration of smooth functions defined over the $s$-dimensional unit cube.
For an integrable function $f\colon [0,1)^s\to \RR$, we denote the integral of $f$ by
\[ I(f) := \int_{[0,1)^s}f(\bsx)\rd \bsx. \]
We approximate $I(f)$ by a linear algorithm of the form
\[ I(f;P_N,W_N) = \sum_{n=0}^{N-1}w_nf(\bsx_n), \]
where $P_N=\{\bsx_n\colon 0\leq n<N\}\subset [0,1)^s$ is the set of quadrature nodes and $W_N=\{w_n\colon 0\leq n<N\}\subset \RR$ is the set of associated weights.
A quasi-Monte Carlo (QMC) rule is an equal-weight quadrature rule where the weights sum up to 1, i.e., a linear algorithm with the special choice $w_n=1/N$ for all $n$.
Thus, $I(f)$ is simply approximated by
\[ I(f;P_N) = \frac{1}{N}\sum_{n=0}^{N-1}f(\bsx_n). \]
We refer to \cite{DKS13,DPbook,Nbook,SJbook} for comprehensive information on QMC integration.

The quality of a given quadrature rule is often measured by the worst-case error, that is, the worst absolute integration error in the unit ball of a normed function space $V$:
\[ e^{\wor}(V; P_N,W_N) := \sup_{\substack{f\in V\\ \|f\|_V\leq 1}}|I(f;P_N,W_N)-I(f)|, \]
for a general linear algorithm, and 
\[ e^{\wor}(V; P_N) := \sup_{\substack{f\in V\\ \|f\|_V\leq 1}}|I(f;P_N)-I(f)|, \]
for a QMC algorithm.
In this paper, we consider weighted unanchored Sobolev spaces with dominating mixed smoothness $\alpha\geq 2$ as introduced in \cite{DKLNS14}, see Section~\ref{subsec;sobolev} for the details.
For such function spaces consisting of smooth functions, it is possible to construct good QMC integration rules achieving the almost optimal order of convergence $O(N^{-\alpha+\epsilon})$ with arbitrarily small $\epsilon>0$, see for instance \cite{BD09,BDGP11,BDLNP12,D08,G15,GD15,GSY16}.
In particular, so-called interlaced polynomial lattice rules have been recently applied in the context of partial differential equations with random coefficients, see for instance \cite{DKLNS14,DLS16}, due to their low construction cost and weak dependence of the worst-case error on the dimension.

In this paper, we propose an alternative QMC-based quadrature rule, named \emph{extrapolated polynomial lattice rule}, which achieves the almost optimal order of convergence with weak dependence on the dimension and can be constructed at a low computational cost.
Roughly speaking, extrapolated polynomial lattice rules are given by constructing classical polynomial lattice rules with consecutive sizes of nodes and then applying Richardson extrapolation in a recursive way.
Therefore, the resulting quadrature rule is a linear algorithm but not equally weighted.
Our motivation behind introduction of extrapolated polynomial lattice rules lies in so-called fast QMC matrix-vector multiplication which is briefly explained below.

Recently in \cite{DKLS15}, Dick et al.\ consider the problem of approximating  integrals of the form 
\[ \int_{[0,1)^s}f(\bsx A) \rd \bsx,\]
where $\bsx$ is an $1\times s$ row vector, and $A$ is an $s\times t$ real matrix.
They design QMC quadrature nodes $\bsx_0,\ldots,\bsx_{N-1}\in [0,1)^s$ suitably such that the matrix-vector product $XA$, where $X=(\bsx_0^{\top},\ldots,\bsx_{N-1}^{\top})^{\top}$, can be computed in $O(N\log N)$ arithmetic operations by using the fast Fourier transform without requiring any structure in the matrix $A$. This is done by choosing the quadrature nodes such that $X = CP$, where $C$ is a circulant matrix and the matrix $P$ reorders and extends the vector $\mathbf{a}$ when multiplied with $P$. 
The resulting vector $XA=Y=(\bsy_0^{\top},\ldots,\bsy_{N-1}^{\top})^{\top}$ is used to approximate $I(f)$ by 
\[ \frac{1}{N}\sum_{n=0}^{N-1}f(\bsy_n). \]
Their proposed method can be applied to classical polynomial lattice rules, but not to interlaced polynomial lattice rules, since the interlacing destroys the circulant structure.
In fact, it has been an open question whether it is possible to design QMC quadrature nodes which achieve higher order of convergence of the integration error for sufficiently smooth functions, and at the same time, can be used in fast QMC matrix-vector multiplication.
Since extrapolated polynomial lattice rules are just given by a linear combination of classical polynomial lattice rules, we can apply fast QMC matrix-vector multiplication to extrapolated polynomial lattice rules in a straightforward manner, which gives an affirmative solution to the above question.

The remainder of this paper is organized as follows.
In the next section we describe the necessary background and notation, namely, weighted unanchored Sobolev spaces with dominating mixed smoothness, Walsh functions, polynomial lattice rules, and Richardson extrapolation.
In Section~\ref{sec:explr}, we first give the key ingredient for introducing extrapolated polynomial lattice rules, and then study their worst-case error in Sobolev spaces with general weights as well as their dependence on the worst-case error bound on the dimension.
Here we prove the existence of good extrapolated polynomial lattice rules achieving the almost optimal order of convergence.
In Section~\ref{sec:cbc}, we restrict ourselves to the case of product weights and show that the so-called fast component-by-component construction algorithm works for finding good extrapolated polynomial lattice rules.
We conclude this paper with numerical experiments in Section~\ref{sec:experiment}.

\section{Preliminaries}

Throughout this paper, let $\NN$ denote the set of positive integers and $\NN_0=\NN\cup \{0\}$.
Let $b$ be a prime, and $\FF_b$ be the finite field with $b$ elements which is identified with the set $\{0,1,\ldots,b-1\}\subset \ZZ$ equipped with addition and multiplication modulo $b$.
Further, we denote by $\FF_b[x]$ the set of all polynomials over $\FF_b$ and by $\FF_b((x^{-1}))$ the field of formal Laurent series over $\FF_b$.
For $m\in \NN$, we write
\[ G_{b,m} = \{q\in \FF_b[x]\colon \deg(m)<m \}\quad \text{and}\quad G^*_{b,m}=G_{b,m}\setminus \{0\}. \]
It is obvious that $|G_{b,m}|=b^m$ and $|G^*_{b,m}|=b^m-1$.
With a slight abuse of notation, we often identify $n\in \NN_0$, whose finite $b$-adic expansion is given by $n=\nu_0+\nu_1b+\cdots$, with the polynomial over $\FF_b$ given by $n(x)=\nu_0+\nu_1x+\cdots$.

\subsection{Sobolev spaces with dominating mixed smoothness}\label{subsec;sobolev}
We give the definition of weighted Sobolev spaces with dominating mixed smoothness.
Let $\alpha,s\in \NN$, $\alpha\geq 2$, $1\leq q,r\leq \infty$, and let $\bsgamma=(\gamma_u)_{u\subset \NN}$ be a set of non-negative real numbers called weights, which plays a role in moderating the importance of different variables or groups of variables in the function space \cite{SW98}.
Assume that $f\colon [0,1)^s\to \RR$ has partial mixed derivatives up to order $\alpha$ in each variable.
We define the norm on the weighted unanchored Sobolev space with dominating mixed smoothness $\alpha$ by
\begin{align*}
\|f\|_{s,\alpha,\bsgamma,q,r} & := \Bigg( \sum_{u\subseteq \{1,\ldots,s\}}\Bigg( \gamma_u^{-q}\sum_{v\subseteq u}\sum_{\bstau_{u\setminus v}\in \{1,\ldots,\alpha\}^{|u\setminus v|}} \\
& \qquad \qquad \int_{[0,1)^{|v|}}\left| \int_{[0,1)^{s-|v|}}f^{(\bstau_{u\setminus v},\bsalpha_v,\bszero)}(\bsx)\rd \bsx_{-v}\right|^q \rd \bsx_v\Bigg)^{r/q}\Bigg)^{1/r},
\end{align*}
with the obvious modifications if $q$ or $r$ is infinite.
Here $(\bstau_{u\setminus v},\bsalpha_v,\bszero)$ denotes a sequence $\bsbeta=(\beta_j)_{1\leq j\leq s}$ with $\beta_j=\tau_j$ if $j\in u\setminus v$, $\beta_j=\alpha$ if $j\in v$, and $\beta_j=0$ if $j\notin u$.
Further, $f^{(\bstau_{u\setminus v},\bsalpha_v,\bszero)}$ denotes the partial mixed derivative of order $(\bstau_{u\setminus v},\bsalpha_v,\bszero)$ of $f$, and we write $\bsx_v=(x_j)_{j\in v}$ and $\bsx_{-v}=(x_j)_{j\in \{1,\ldots,s\}\setminus v}$.
We denote the Banach-Sobolev space of all such functions with finite norm $\|\cdot\|_{s,\alpha,\bsgamma,q,r}$ by $W_{s,\alpha,\bsgamma,q,r}$.

In what follows, let $B_{\tau}(\cdot)$ denote the Bernoulli polynomial of degree $\tau$.
We put $b_{\tau}(\cdot)=B_{\tau}(\cdot)/\tau!$ and $b_{\tau}=b_{\tau}(0)$.
Further, let $\tilde{b}_{\tau}(\cdot)\colon \RR\to \RR$ denote the one-periodic extension of the polynomial $b_{\tau}(\cdot)\colon [0,1)\to \RR$.
Then, as shown in the proof of \cite[Theorem~3.5]{DKLNS14} we have the following.

\begin{lemma}\label{lem:func_represent}
For any $f\in W_{s,\alpha,\bsgamma,q,r}$, we have a pointwise representation
\[ f(\bsy) = \sum_{u\subseteq \{1,\ldots,s\}}f_u(\bsy_u), \]
where each function $f_u$ depends only on a subset of variables $\bsy_u=(y_j)_{j\in u}$ and is explicitly given by
\begin{align*}
f_u(\bsy_u) & = \sum_{v\subseteq u}\sum_{\bstau_{u\setminus v}\in \{1,\ldots,\alpha\}^{|u\setminus v|}}\prod_{j\in u\setminus v}b_{\tau_j}(y_j) \\
& \qquad \times (-1)^{(\alpha+1)|v|}\int_{[0,1)^s} f^{(\bstau_{u\setminus v},\bsalpha_v,\bszero)}(\bsx) \prod_{j\in v}\tilde{b}_{\alpha}(x_j-y_j) \rd \bsx.
\end{align*}
Furthermore we have
\[ \|f\|_{s,\alpha,\bsgamma,q,r} = \left( \sum_{u\subseteq \{1,\ldots,s\}}\|f_u\|_{s,\alpha,\bsgamma,q,r}^r\right)^{1/r}. \]
\end{lemma}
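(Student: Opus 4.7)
The plan is to derive the pointwise decomposition by applying a one-dimensional Euler--Maclaurin type identity iteratively in each of the $s$ variables, and then to establish the norm identity using the ANOVA-like orthogonality of the resulting components $f_u$.

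\textbf{Step 1.} For sufficiently smooth $g\colon[0,1]\to\RR$, I would first establish the one-variable identity
$$g(y) = \int_0^1 g(x)\rd x + \sum_{\tau=1}^{\alpha} b_\tau(y)\int_0^1 g^{(\tau)}(x)\rd x + (-1)^{\alpha+1}\int_0^1 \tilde{b}_\alpha(x-y)\,g^{(\alpha)}(x)\rd x,$$
which follows from $\alpha$ rounds of integration by parts using $\tilde{b}_\tau' = \tilde{b}_{\tau-1}$, the $1$-periodicity (and for $\tau\geq 2$ continuity) of $\tilde{b}_\tau$, together with $\int_0^1 g^{(\tau)}(x)\rd x = g^{(\tau-1)}(1)-g^{(\tau-1)}(0)$; the resulting boundary terms at the $\tau$-th step collapse into the coefficient $b_\tau(y)$ via $b_\tau(1-y) = (-1)^\tau b_\tau(y)$.

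\textbf{Step 2.} Apply Step 1 to $f(\bsy)$ one coordinate at a time. Each variable $y_j$ produces one of three kinds of terms: the constant $\int_0^1 \rd x_j$ (case ``$j\notin u$''), a Bernoulli term $b_{\tau_j}(y_j)\int_0^1 \partial_j^{\tau_j}f \rd x_j$ with $\tau_j\in\{1,\ldots,\alpha\}$ (case ``$j\in u\setminus v$''), or the remainder $(-1)^{\alpha+1}\int_0^1 \tilde{b}_\alpha(x_j-y_j)\partial_j^\alpha f \rd x_j$ (case ``$j\in v$''). Grouping the resulting terms by the subsets $v\subseteq u\subseteq\{1,\ldots,s\}$ encoded by these choices, and collecting the sign factors $(-1)^{(\alpha+1)|v|}$, yields the stated explicit formula for $f_u$ and hence the decomposition $f(\bsy)=\sum_u f_u(\bsy_u)$.

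\textbf{Step 3.} For the norm identity, substitute $f=\sum_u f_u$ into the definition of $\|f\|_{s,\alpha,\bsgamma,q,r}^r$ and argue that, within the summand indexed by $w\subseteq\{1,\ldots,s\}$, only the component $f_u$ with $u=w$ contributes. If $w\not\subseteq u$, differentiation in some variable outside $u$ annihilates $f_u$. If $w\subsetneq u$, pick $j\in u\setminus w$: then the derivative multi-index has a zero in position $j$ and $j\notin v$, so the integration $\int_0^1 \rd x_j$ acts on the $y_j$-factor of $f_u$, which is either some $b_\tau(x_j)$ with $\tau\geq 1$ or some $\tilde{b}_\alpha(\cdot-x_j)$. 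Both integrate to zero on $[0,1]$ since $\int_0^1 b_\tau(y)\rd y = 0$ for $\tau\geq 1$ and $\int_0^1 \tilde{b}_\alpha(z)\rd z = 0$. Consequently the $w$-summand of $\|f\|^r$ equals the $w$-summand of $\|f_w\|^r$, and summing over $w$ yields $\|f\|_{s,\alpha,\bsgamma,q,r}^r = \sum_u \|f_u\|_{s,\alpha,\bsgamma,q,r}^r$.

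The main obstacle is Step 3: the pointwise identity is a routine coordinate-wise bookkeeping once Step 1 is established, but the clean orthogonal splitting of the norm requires carefully tracking how the Bernoulli factors appearing inside $f_u$ interact with the mixed partial derivatives and partial integrations used in the definition of $\|\cdot\|_{s,\alpha,\bsgamma,q,r}$, in particular ensuring that no residual cross-terms survive between different components $f_u$ and $f_{u'}$.
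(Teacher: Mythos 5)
Your proposal is correct and follows the standard route: a one-dimensional Bernoulli-polynomial (Euler--Maclaurin) expansion applied coordinate-wise, followed by an ANOVA-type orthogonality argument for the norm. Note, however, that the paper itself gives no proof of this lemma --- it explicitly defers to the proof of Theorem~3.5 in the cited reference DKLNS14 --- so there is no internal argument in this paper to compare against; your derivation is presumably the one given there.

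Two small points worth tightening. First, in Step~1, the boundary terms from the $\tau$-th integration by parts match up because $\tilde b_\tau$ is $1$-periodic (and, for $\tau\geq 2$, continuous), so that $\tilde b_\tau(y-1)g^{(\tau-1)}(1)-\tilde b_\tau(y)g^{(\tau-1)}(0) = b_\tau(y)\bigl(g^{(\tau-1)}(1)-g^{(\tau-1)}(0)\bigr)$ for $y\in(0,1)$; the reflection identity $b_\tau(1-y)=(-1)^\tau b_\tau(y)$ is not what makes the boundary terms collapse, but is what converts $\tilde b_\alpha(y-x)$ into $(-1)^\alpha\tilde b_\alpha(x-y)$ in the remainder and hence produces the sign $(-1)^{\alpha+1}$ appearing in the lemma. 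Second, in Step~3 you correctly show that within the $w$-summand of $\|f\|^r$ only $f_w$ contributes; to conclude $\|f\|^r=\sum_u\|f_u\|^r$ you should also note (by the same vanishing-integral argument, now applied to $f_w$ alone) that the only nonvanishing summand in $\|f_w\|^r$ is its $w$-summand. Both are minor and easily supplied; the argument is otherwise sound.
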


\subsection{Walsh functions}\label{subsec:walsh}
Here we introduce the definition of Walsh functions and state the result on the decay of Walsh coefficients for functions in $W_{s,\alpha,\bsgamma,q,r}$.

\begin{definition}
For a prime $b$, put $\omega_b=\exp(2\pi i/b)$.
For $k\in \NN_0$ with finite $b$-adic expansion $k=\kappa_0+\kappa_1b+\cdots$, the $k$-th Walsh function ${}_{b}\wal_k\colon [0,1)\to \{1,\omega_b,\ldots,\omega_b^{b-1}\}$ is defined by
\[ {}_{b}\wal_k(x) := \omega_{b}^{\kappa_0 \xi_1+\kappa_1 \xi_2+\cdots},\]
for $x\in [0,1)$ with $b$-adic expansion $x=\xi_1/b+\xi_2/b^2+\cdots$, where this expansion is understood to be unique in the sense that infinitely many of the $\xi_i$ are different from $b-1$.

For $s\geq 2$ and $\bsk=(k_1,\ldots,k_s)\in \NN_0^s$, the $\bsk$-th Walsh function ${}_{b}\wal_{\bsk}\colon [0,1)^s\to \{1,\omega_b,\ldots,\omega_b^{b-1}\}$ is defined by
\[ {}_{b}\wal_{\bsk}(\bsx) := \prod_{j=1}^{s}{}_{b}\wal_{k_j}(x_j) ,\]
for $\bsx=(x_1,\ldots,x_s)\in [0,1)^s$.
\end{definition}
\noindent
Since we shall use Walsh functions in a fixed prime base $b$ in this paper, we omit the subscript and simply write $\wal_k$ or $\wal_{\bsk}$. Note that the system $\{\wal_{\bsk}\colon \bsk\in \NN_0^s\}$ is a complete orthonormal system in $L^2([0,1)^s)$, see \cite[Theorem~A.11]{DPbook}. Thus for $f\in L^2([0,1)^s)$, we have the Walsh expansion of $f$:
\[ \sum_{\bsk\in \NN_0^s}\hat{f}(\bsk)\wal_{\bsk}(\bsx), \]
where $\hat{f}(\bsk)$ denotes the $\bsk$-th Walsh coefficient of $f$ defined by
\[ \hat{f}(\bsk):=\int_{[0,1)^s}f(\bsx)\overline{\wal_{\bsk}(\bsx)}\rd \bsx. \]
Here we note that the integral of $f$ is given by $I(f)=\hat{f}(\bszero)$.

The Walsh coefficients of a function $f\in W_{s,\alpha,\bsgamma,q,r}$ are bounded as follows, see \cite[Theorem~14]{D09} and \cite[Theorem~3.5]{DKLNS14} for the proof.

\begin{lemma}\label{lem:walsh_bound}
For $k\in \NN$, we denote the $b$-adic expansion $k$ by $k=\kappa_1b^{a_1-1}+\cdots+\kappa_vb^{a_v-1}$ with $a_1>\cdots>a_v>0$ and $\kappa_1,\ldots,\kappa_v\in \{1,\ldots,b-1\}$.
We define the metric $\mu_{\alpha}:\NN_0\to \NN_0$ by 
\[ \mu_{\alpha}(k):=a_1+\cdots+a_{\min(v,\alpha)}, \]
and $\mu_{\alpha}(0):=0$. In case of a vector $\bsk=(k_1,\ldots,k_s)\in \NN_0^s$, we define
\[ \mu_{\alpha}(\bsk) := \sum_{j=1}^{s}\mu_{\alpha}(k_j). \]

For a subset $u\subseteq \{1,\ldots,s\}$ and $\bsk_u\in \NN^{|u|}$, the $(\bsk_u,\bszero)$-th Walsh coefficient of a function $f\in W_{s,\alpha,\bsgamma,q,r}$ is bounded by
\[ |\hat{f}(\bsk_u,\bszero)| \leq  \gamma_uC_{\alpha}^{|u|}b^{-\mu_{\alpha}(\bsk_u)}\|f_u\|_{s,\alpha,\bsgamma,q,r}, \]
where 
\begin{align*}
C_{\alpha} & = \max\left( \frac{2}{(2\sin\frac{\pi}{b})^{\alpha}}, \max_{1\leq z\leq \alpha-1}\frac{1}{(2\sin\frac{\pi}{b})^{z}}\right) \\
& \qquad \times \left( 1+\frac{1}{b}+\frac{1}{b(b+1)}\right)^{\alpha-2}\left( 3+\frac{2}{b}+\frac{2b+1}{b-1}\right).
\end{align*}
\end{lemma}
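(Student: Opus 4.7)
The plan is to reduce the bound on $\hat{f}(\bsk_u,\bszero)$ to a bound on the Walsh coefficient of the single component $f_u$, and then extract the decay factor $b^{-\mu_\alpha(\bsk_u)}$ by iterated integration by parts against Walsh functions.

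First I would use the pointwise decomposition $f=\sum_{u'\subseteq\{1,\ldots,s\}}f_{u'}$ from Lemma~\ref{lem:func_represent} to write $\hat{f}(\bsk_u,\bszero)=\sum_{u'}\widehat{f_{u'}}(\bsk_u,\bszero)$ and isolate the contribution of $f_u$. For $u'\not\supseteq u$, the function $f_{u'}$ is constant in some variable $y_j$ with $j\in u\setminus u'$, so that integration against the nontrivial $\wal_{k_j}$ (where $k_j\neq 0$) gives zero. For $u'\supsetneq u$, I would show that $\int_{[0,1)^{s-|u|}}f_{u'}(\bsy_{u'})\rd\bsy_{-u}=0$: indeed, in every term of the explicit expression for $f_{u'}$ in Lemma~\ref{lem:func_represent}, the extra variable $j\in u'\setminus u$ appears either as a factor $b_{\tau_j}(y_j)$ with $\tau_j\geq 1$ or inside the kernel $\tilde{b}_\alpha(x_j-y_j)$ with $\alpha\geq 1$, both of which have vanishing integral over $[0,1)$. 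Consequently $\hat{f}(\bsk_u,\bszero)=\widehat{f_u}(\bsk_u)$, viewing $\bsk_u$ as a vector in $\NN^{|u|}$.

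Next, for each $j\in u$ I would write the $b$-adic expansion $k_j=\kappa_{j,1}b^{a_{j,1}-1}+\cdots+\kappa_{j,v_j}b^{a_{j,v_j}-1}$ and apply integration by parts in the variable $y_j$ exactly $\min(v_j,\alpha)$ times. The crucial analytical input is the known bound on iterated antiderivatives of Walsh functions in base $b$: the $z$-fold antiderivative of $\overline{\wal_{k_j}}$ has sup-norm bounded by a constant times $b^{-(a_{j,1}+\cdots+a_{j,z})}$ for $z\leq v_j$, which produces exactly the factor $b^{-\mu_\alpha(k_j)}$ after $\min(v_j,\alpha)$ steps. The boundary terms arising at $y_j=0,1$ are to be matched with the Bernoulli-polynomial pieces in the representation of $f_u$ in Lemma~\ref{lem:func_represent}: the choice of $b_{\tau_j}$ (for $j\in u\setminus v$) and $\tilde{b}_\alpha$ (for $j\in v$) is precisely what is needed so that, summed over $v\subseteq u$ and $\bstau_{u\setminus v}\in\{1,\ldots,\alpha\}^{|u\setminus v|}$, every boundary contribution is absorbed into a clean integral of a mixed derivative $f^{(\bstau_{u\setminus v},\bsalpha_v,\bszero)}$ against a product of bounded antiderivatives of Walsh functions.

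Finally I would apply Hölder's inequality in the resulting integral representation (with conjugate exponents for $q$ and $r$) to bound it by $\gamma_u\|f_u\|_{s,\alpha,\bsgamma,q,r}$; the constant $C_\alpha^{|u|}$ arises by collecting, coordinate by coordinate, the sup-norm constants for the Walsh antiderivatives together with the factors from $b_{\tau_j}$ and $\tilde{b}_\alpha$. The complicated form of $C_\alpha$ reflects the need to treat separately the regime $v_j<\alpha$ (only $v_j$ integrations by parts are performed) and $v_j\geq\alpha$ (full $\alpha$ integrations by parts), which yields the two arguments of the outer $\max$ in the definition.

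The main obstacle is the bookkeeping in the second step: one must verify that the boundary terms produced by $\alpha$-fold integration by parts in each coordinate $j\in u$ combine across all $2^{|u|}\cdot\alpha^{|u|}$ choices of $(v,\bstau_{u\setminus v})$ so as to reconstruct exactly the Bernoulli-based representation of $f_u$ in Lemma~\ref{lem:func_represent}, with no leftover terms. This is precisely the combinatorial identity underlying the unanchored Sobolev norm, and is what makes the bound involve $\|f_u\|$ rather than a stronger global norm of $f$.
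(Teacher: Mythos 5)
The paper does not give a proof of this lemma; it simply cites \cite[Theorem~14]{D09} and \cite[Theorem~3.5]{DKLNS14}. Your sketch faithfully reconstructs the argument in those references. The reduction $\hat f(\bsk_u,\bszero)=\widehat{f_u}(\bsk_u)$ via the decomposition of Lemma~\ref{lem:func_represent} is correct as you argue it: for $u'\not\supseteq u$ pick $j\in u\setminus u'$ and use $\int_0^1\overline{\wal_{k_j}(y_j)}\rd y_j=0$ since $k_j\neq 0$; for $u'\supsetneq u$ pick $j\in u'\setminus u$, where $k_j=0$, and use that $b_{\tau_j}$ (with $\tau_j\geq 1$) and $y\mapsto\tilde b_\alpha(x_j-y)$ both have zero mean. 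The decay $b^{-\mu_\alpha(k_j)}$ and the two branches of the outer $\max$ in $C_\alpha$ (according to whether the number $v_j$ of nonzero $b$-adic digits of $k_j$ is $<\alpha$ or $\geq\alpha$) then come out as you describe from the bounds on iterated Walsh antiderivatives established in \cite{D09}. One remark: once the explicit representation of $f_u$ from Lemma~\ref{lem:func_represent} is invoked, one need not re-derive it by integration by parts on $f$ with boundary bookkeeping; the obstacle you flag is exactly what that representation has already absorbed. The shorter route, taken in \cite{DKLNS14}, is to compute the one-dimensional Walsh coefficients of $b_{\tau_j}$ and of $y\mapsto\tilde b_\alpha(x_j-y)$ directly and then sum over $v\subseteq u$ and $\bstau_{u\setminus v}\in\{1,\ldots,\alpha\}^{|u\setminus v|}$, applying H\"older as you do at the end. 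Your route, closer to \cite{D09}, is equally valid and leads to the same constant.
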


\begin{remark}\label{rem:walsh_bound}
For the special but important case $b=2$, Yoshiki \cite{Y15} proved that the constant $C_{\alpha}$ can be improved to $C_{\alpha}=2^{-1/p'}$ where $p'$ denotes the H\"older conjugate of $q$, i.e., $1\leq q'\leq \infty$ which satisfies $1/q+1/q'=1$.
\end{remark}

\subsection{Polynomial lattice rules}\label{subsec:poly_lattice}
Polynomial lattice point sets are a special construction of QMC quadrature nodes introduced by Niederreiter in \cite{N88}, which are defined as follows.

\begin{definition}
Let $p\in \FF_{b}[x]$ with $\deg(p)=m$ and $\bsq=(q_1,\ldots,q_s)\in (G^*_{b,m})^s$.
We define the map $v_m\colon \FF_b((x^{-1}))\to [0,1)$ by
\[ v_m\left( \sum_{i=w}^{\infty}a_i x^{-i}\right) := \sum_{i=\max\{1,w\}}^{m}a_ib^{-i}. \]
For $0\leq n< b^m$, which is identified with a polynomial over $\FF_b$, put
\[ \bsx_n = \left( v_m\left(\frac{nq_1}{p}\right),\ldots, v_m\left(\frac{nq_s}{p}\right)\right)\in [0,1)^s. \]
Then the point set $P(p,\bsq)=\{\bsx_0,\ldots,\bsx_{b^m-1}\}$ is called a polynomial lattice point set (with modulus $p$ and generating vector $\bsq$).
A QMC rule using the point set $P(p,\bsq)$ as quadrature nodes is called a polynomial lattice rule.
\end{definition}

The concept of dual polynomial lattice plays a key role in the error analysis of polynomial lattice rules.
\begin{definition}
For $k\in \NN_0$ with finite $b$-adic expansion $k=\kappa_0+\kappa_1b+\cdots$, we define the map $\tr_m\colon \NN_0\to G_{b,m}$ by
\[ \tr_m(k) = \kappa_0+\kappa_1x+\cdots +\kappa_{m-1}x^{m-1}. \]
For $p\in \FF_{b}[x]$ with $\deg(p)=m$ and $\bsq=(q_1,\ldots,q_s)\in (G^*_{b,m})^s$, the dual polynomial lattice of $P(p,\bsq)$ is defined by
\[ P^{\perp}(p,\bsq) := \left\{\bsk\in \NN_0^s\colon \tr_m(\bsk)\cdot \bsq \equiv 0  \pmod p \right\}. \]
\end{definition}

\begin{remark}\label{rem:poly_lattice}
For $\bsk\in \NN_0^s$ such that $b^m\mid k_j$ for all $j$, we have $\tr_m(\bsk)=\bszero$.
Thus, regardless of the choice on $p$ and $\bsq$, such $\bsk$ is always included in the dual polynomial lattice $P^{\perp}(p,\bsq)$.
\end{remark}

The following lemma shows the character property of polynomial lattice point sets, see for instance \cite[Lemmas~4.75 and 10.6]{DPbook} for the proof.
\begin{lemma}\label{lem:character}
Let $p\in \FF_{b}[x]$ with $\deg(p)=m$ and $\bsq=(q_1,\ldots,q_s)\in (G^*_{b,m})^s$.
For $\bsk\in \NN_0^s$, we have
\begin{align*}
\sum_{\bsx\in P(p,\bsq)}\wal_{\bsk}(\bsx) = \begin{cases}
b^m & \text{if $\bsk\in P^{\perp}(p,\bsq)$}, \\
0 & \text{otherwise.}
\end{cases}
\end{align*}
\end{lemma}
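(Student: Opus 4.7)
The plan is to work inside the field of formal Laurent series $\FF_b((x^{-1}))$ and reduce the character sum to counting fibers of an $\FF_b$-linear map. The first step is to establish the pointwise identity
\[ \wal_k(v_m(f)) = \omega_b^{[\tr_m(k)\, f]_{-1}}, \]
where $[g]_{-1}$ denotes the coefficient of $x^{-1}$ in $g\in \FF_b((x^{-1}))$. This follows by unpacking definitions: if $f = \sum_{i\geq w} a_i x^{-i}$ and $\tr_m(k) = \kappa_0+\kappa_1 x+\cdots+\kappa_{m-1}x^{m-1}$, then $v_m(f)$ has $b$-adic digits $(a_1,\ldots,a_m)$, and $[\tr_m(k)\,f]_{-1} = \sum_{i=0}^{m-1}\kappa_i a_{i+1}$, which is exactly the exponent in the definition of $\wal_k$ once one notes that the digits of $k$ beyond position $m-1$ pair only with $b$-adic digits of $v_m(f)$ that are zero.

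Applying this coordinatewise and multiplying gives $\wal_{\bsk}(\bsx_n) = \omega_b^{[nS/p]_{-1}}$ where $S := \sum_{j=1}^s \tr_m(k_j)q_j\in \FF_b[x]$. Polynomial division $S = \alpha p + r$ with $\deg r<m$ lets me split $nS/p = n\alpha + nr/p$; since $n\alpha\in \FF_b[x]$ has no negative-degree terms, its contribution to $[\,\cdot\,]_{-1}$ vanishes, leaving $\wal_{\bsk}(\bsx_n) = \omega_b^{\phi(n)}$ for the $\FF_b$-linear map $\phi\colon G_{b,m}\to\FF_b$, $\phi(n):=[nr/p]_{-1}$. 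If $\bsk\in P^{\perp}(p,\bsq)$ then $S\equiv 0\pmod p$, so $r=0$, $\phi\equiv 0$, and the sum equals $b^m$. Otherwise, assuming (see below) that $\phi\not\equiv 0$, linearity forces $\phi$ to be surjective with every fiber of size $b^{m-1}$, and the standard identity $\sum_{c\in\FF_b}\omega_b^c=0$ then yields the value $0$.

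The one real obstacle is therefore the non-degeneracy statement: $r\neq 0$ implies $\phi\not\equiv 0$. I would settle this by expanding $r/p = \sum_{j\geq 1}c_j x^{-j}$ in $\FF_b((x^{-1}))$ and computing $\phi(x^i) = [x^i r/p]_{-1} = c_{i+1}$ for $i=0,\ldots,m-1$. Thus $\phi$ vanishes on the $\FF_b$-basis $\{1,x,\ldots,x^{m-1}\}$ of $G_{b,m}$ if and only if $c_1=\cdots=c_m=0$. But in that case $r/p$ would start at order $\leq -(m+1)$, and multiplying by $p$ of degree $m$ would give $r = p\cdot(r/p)$ of strictly negative degree, contradicting $r\in \FF_b[x]\setminus\{0\}$. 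This completes the proof sketch; the underlying mechanism is simply the non-degeneracy of the $\FF_b$-bilinear pairing $(g,h)\mapsto [gh]_{-1}$ between polynomials of degree $<m$ and Laurent series truncated below $x^{-m}$.
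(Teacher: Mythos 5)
Your proof is correct, and it is genuinely self-contained where the paper is not: the paper simply cites \cite[Lemmas~4.75 and 10.6]{DPbook} for this fact, and the standard argument there factors through the digital net framework---one first realizes $P(p,\bsq)$ as a digital net with explicit generating matrices built from the Laurent coefficients of $q_j/p$, then applies the abstract character property of digital nets and identifies the dual net with $P^{\perp}(p,\bsq)$. Your argument stays entirely in $\FF_b((x^{-1}))$: the key identity $\wal_k(v_m(f))=\omega_b^{[\tr_m(k)f]_{-1}}$ collapses $\wal_{\bsk}(\bsx_n)$ to $\omega_b^{\phi(n)}$ with $\phi(n)=[nr/p]_{-1}$, and everything reduces to whether the $\FF_b$-linear form $\phi$ on $G_{b,m}$ is zero. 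The non-degeneracy step ($r\neq 0\Rightarrow\phi\not\equiv 0$) is handled correctly by evaluating $\phi$ on the monomial basis and reading off leading-order Laurent coefficients of $r/p$; this is exactly the non-degeneracy of the residue pairing. Both routes ultimately rest on the same orthogonality of additive characters of $\FF_b^m$, but your version avoids setting up generating matrices and dual nets, which makes it shorter and better matched to the specific polynomial lattice setting. Two cosmetic remarks: it is worth saying explicitly that the sum over $P(p,\bsq)$ means the sum over $n=0,\ldots,b^m-1$ (so multiplicities, if any, are counted), and the phrase ``Laurent series truncated below $x^{-m}$'' would read more precisely as the quotient $x^{-1}\FF_b[[x^{-1}]]/x^{-m-1}\FF_b[[x^{-1}]]$; neither affects correctness.
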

\noindent
By considering the Walsh expansion of a continuous function $f\colon [0,1)^s \to \RR$ with $\sum_{\bsk\in \NN_0^s}|\hat{f}(\bsk)|<\infty$ and using Lemma~\ref{lem:character}, we obtain
\begin{align}
 I(f; P(p,\bsq)) & = \frac{1}{b^m}\sum_{\bsx\in P(p,\bsq)}\sum_{\bsk\in \NN_0^s}\hat{f}(\bsk)\wal_{\bsk}(\bsx) \nonumber \\
& = \frac{1}{b^m}\sum_{\bsk\in \NN_0^s}\hat{f}(\bsk)\sum_{\bsx\in P(p,\bsq)}\wal_{\bsk}(\bsx) \nonumber \\
& = \sum_{\bsk\in P^{\perp}(p,\bsq)}\hat{f}(\bsk) = I(f)+\sum_{\bsk\in P^{\perp}(p,\bsq)\setminus \{\bszero\}}\hat{f}(\bsk) .\label{eq:poly_lattice_error}
\end{align}

\subsection{Richardson extrapolation}\label{subsec:extrapolation}
Richardson extrapolation is a classical technique to speed up the convergence of a sequence by exploiting the asymptotic expansion of each term, see for instance \cite[Section~1.4]{DRbook} and \cite[Section~3.2.7]{Gbook}. In our current setting, we may have a sequence of polynomial lattice rules with the consecutive sizes of nodes, $b^1,b^2,\ldots$, which means that each term of the sequence corresponds to the approximate value $I(f;P(p,\bsq))$ for some $m\in \NN$. 

To simplify the situation, instead of an infinite sequence, let us consider a chain of $\alpha$ reals $(I^{(1)}_n)_{m-\alpha+1\leq n \leq m}$ with each given by
\begin{align}\label{eq:extra_seq}
 I^{(1)}_n = c_0 + \frac{c_1}{b^n}+\cdots + \frac{c_{\alpha-1}}{b^{(\alpha -1)n}} + R_{\alpha,b^n}, 
\end{align}
where $b>1$, $c_0,\ldots,c_{\alpha-1}\in \RR$ and $R_{\alpha,n}\in O(b^{-\alpha n})$. As shown later in \eqref{eq:decomp}, $I(f;P(p,\bsq))$ has actually such an expansion. In standard notation for extrapolation methods, the reciprocal $1/b^n$ should be regarded as a so-called admissible value of the step parameter $h$ for each term $I^{(1)}_n$. The aim here is to approximate $c_0$ as precisely as possible from the chain $(I^{(1)}_n)_{m-\alpha+1\leq n \leq m}$ without knowing the coefficients $c_1,\ldots,c_{\alpha-1}$.

To do so, let us consider the following recursive application of Richardson extrapolation of successive orders:
For $1\leq \tau< \alpha$, compute
\begin{align*}
I^{(\tau+1)}_{n} = \frac{b^{\tau} I^{(\tau)}_{n}-I^{(\tau)}_{n-1}}{b^{\tau}-1} \qquad \text{for $m-\alpha+\tau < n\leq m$}.
\end{align*}
Regarding this recursion, the following result holds. Although a similar result has been shown, for instance, in \cite{LP17}, we give the proof for self-containedness.

\begin{lemma}\label{lem:extra}
For a given $1\leq \tau \leq \alpha$, let
\[ a_{\nu}^{(\tau)} := \prod_{j=1}^{\nu-1}\left(\frac{-1}{b^j-1}\right)\prod_{j=1}^{\tau-\nu}\left(\frac{b^j}{b^j-1}\right) \qquad \text{for $1\leq \nu\leq \tau$}, \]
where the empty product is set to $1$.
Then we have
\[ I^{(\tau)}_{n} = \sum_{\nu=1}^{\tau}a_{\nu}^{(\tau)} I_{n+1-\nu}^{(1)} \qquad \text{for $m-\alpha+\tau \leq n\leq m$}. \]
\end{lemma}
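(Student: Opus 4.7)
My plan is to prove the identity by induction on $\tau$, using the recursive definition $I^{(\tau+1)}_n = (b^\tau I^{(\tau)}_n - I^{(\tau)}_{n-1})/(b^\tau-1)$ directly.

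The base case $\tau=1$ is immediate: both products in the definition of $a^{(1)}_1$ are empty, so $a^{(1)}_1=1$ and the claim reduces to $I^{(1)}_n = I^{(1)}_n$.

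For the inductive step, assume the formula holds at level $\tau$. Substituting into the recursion and reindexing the second sum by $\nu\mapsto \nu-1$, I obtain
\begin{align*}
I^{(\tau+1)}_n = \frac{b^\tau}{b^\tau-1}\sum_{\nu=1}^{\tau} a^{(\tau)}_\nu I^{(1)}_{n+1-\nu} - \frac{1}{b^\tau-1}\sum_{\nu=2}^{\tau+1} a^{(\tau)}_{\nu-1} I^{(1)}_{n+1-\nu}.
\end{align*}
So I need to verify that the coefficient of $I^{(1)}_{n+1-\nu}$ on the right-hand side equals $a^{(\tau+1)}_\nu$ in three cases: (i) $\nu=1$, where only the first sum contributes and the coefficient telescopes to $\prod_{j=1}^{\tau} b^j/(b^j-1) = a^{(\tau+1)}_1$; (ii) $\nu=\tau+1$, where only the second sum contributes and the factor $-1/(b^\tau-1)$ extends the first product to $\prod_{j=1}^\tau (-1/(b^j-1)) = a^{(\tau+1)}_{\tau+1}$; and (iii) the interior range $2\le \nu\le \tau$, where both terms contribute.

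The interior case is the only real calculation. Factoring out the common part $A\cdot B := \prod_{j=1}^{\nu-2}(-1/(b^j-1))\cdot \prod_{j=1}^{\tau-\nu}(b^j/(b^j-1))$ from both $a^{(\tau)}_\nu$ and $a^{(\tau)}_{\nu-1}$, the desired equality reduces, after clearing the common denominator $(b^\tau-1)(b^{\nu-1}-1)(b^{\tau-\nu+1}-1)$, to the polynomial identity
\begin{align*}
-b^{\tau}(b^{\tau-\nu+1}-1) - b^{\tau-\nu+1}(b^{\nu-1}-1) = -b^{\tau-\nu+1}(b^{\tau}-1),
\end{align*}
and both sides expand to $-b^{2\tau-\nu+1}+b^{\tau-\nu+1}$. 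I expect this algebraic verification to be the only obstacle; once it is settled, the induction closes and the lemma follows. The bookkeeping of the empty products at the endpoints $\nu=1$ and $\nu=\tau+1$ needs a brief check but is routine.
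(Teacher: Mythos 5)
Your proposal is correct and follows essentially the same route as the paper's own proof: induction on $\tau$, substituting the inductive hypothesis into the recursion $I^{(\tau+1)}_n = (b^\tau I^{(\tau)}_n - I^{(\tau)}_{n-1})/(b^\tau-1)$, reindexing, and verifying the coefficient identity $\frac{b^\tau}{b^\tau-1}a_\nu^{(\tau)} - \frac{1}{b^\tau-1}a_{\nu-1}^{(\tau)} = a_\nu^{(\tau+1)}$ in the three cases $\nu=1$, $2\le\nu\le\tau$, and $\nu=\tau+1$. The only cosmetic difference is that you reduce the interior-case verification to an explicit polynomial identity, whereas the paper asserts the telescoping product equality directly; your check is correct (both sides do expand to $-b^{2\tau-\nu+1}+b^{\tau-\nu+1}$).
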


\begin{proof}
We prove the lemma by induction on $\tau$.
As $a_1^{(1)}=1$, the case $\tau=1$ is trivial. 
Let $1\leq \tau<\alpha$ and suppose that the equality
\[ I^{(\tau)}_{n} = \sum_{\nu=1}^{\tau}a_{\nu}^{(\tau)} I_{n+1-\nu}^{(1)} \]
holds for all $m-\alpha+\tau \leq n\leq m$.
It follows from the definition of $I^{(\tau+1)}_{n}$ that
\begin{align*}
I^{(\tau+1)}_{n} & = \frac{b^{\tau} I^{(\tau)}_{n}-I^{(\tau)}_{n-1}}{b^{\tau}-1} \\
& = \frac{b^\tau}{b^\tau-1} \sum_{\nu=1}^{\tau}a_{\nu}^{(\tau)} I_{n+1-\nu}^{(1)} - \frac{1}{b^\tau-1} \sum_{\nu=1}^{\tau}a_{\nu}^{(\tau)} I_{n-\nu}^{(1)} \\
& = \frac{b^\tau}{b^\tau-1} a_1^{(\tau)} I_{n}^{(1)} - \frac{1}{b^\tau-1} a_{\tau}^{(\tau)} I_{n-\tau}^{(1)} + \sum_{\nu=2}^{\tau}\left(\frac{b^\tau}{b^\tau-1}a_{\nu}^{(\tau)}- \frac{1}{b^\tau-1}a_{\nu-1}^{(\tau)}\right)I_{n+1-\nu}^{(1)},
\end{align*}
for $m-\alpha+\tau < n\leq m$.
For each term on the right-most side above, we have
\begin{align*}
\frac{b^\tau}{b^\tau-1} a_1^{(\tau)} & = \frac{b^\tau}{b^\tau-1}\prod_{j=1}^{\tau-1}\left(\frac{b^j}{b^j-1}\right) = \prod_{j=1}^{\tau}\left(\frac{b^j}{b^j-1}\right) = a_1^{(\tau+1)}, \\
- \frac{1}{b^\tau-1} a_{\tau}^{(\tau)} & = \frac{-1}{b^\tau-1}\prod_{j=1}^{\tau-1}\left(\frac{-1}{b^j-1}\right) = \prod_{j=1}^{\tau}\left(\frac{-1}{b^j-1}\right) = a_{\tau+1}^{(\tau+1)} ,
\end{align*}
and for $2\leq \nu\leq \tau$
\begin{align*}
& \frac{b^\tau}{b^\tau-1}a_{\nu}^{(\tau)}- \frac{1}{b^\tau-1}a_{\nu-1}^{(\tau)} \\
& = \frac{b^\tau}{b^\tau-1} \prod_{j=1}^{\nu-1}\left(\frac{-1}{b^j-1}\right)\prod_{j=1}^{\tau-\nu}\left(\frac{b^j}{b^j-1}\right) - \frac{1}{b^\tau-1} \prod_{j=1}^{\nu-2}\left(\frac{-1}{b^j-1}\right)\prod_{j=1}^{\tau-\nu+1}\left(\frac{b^j}{b^j-1}\right) \\
& = \prod_{j=1}^{\nu-1}\left(\frac{-1}{b^j-1}\right)\prod_{j=1}^{\tau+1-\nu}\left(\frac{b^j}{b^j-1}\right) = a_{\nu}^{(\tau+1)}.
\end{align*}
Thus we have
\begin{align*}
I^{(\tau+1)}_{n} = a_1^{(\tau+1)} I_{n}^{(1)} + a_{\tau+1}^{(\tau+1)} I_{n-\tau}^{(1)} + \sum_{\nu=2}^{\tau}a_{\nu}^{(\tau+1)} I_{n+1-\nu}^{(1)} = \sum_{\nu=1}^{\tau+1}a_{\nu}^{(\tau+1)} I_{n+1-\nu}^{(1)},
\end{align*}
which proves the lemma.
\end{proof}

In particular, this lemma shows that the final value $I^{(\alpha)}_{m}$ is given by
\begin{align}\label{eq:extra_final}
    I^{(\alpha)}_{m} = \sum_{\tau=1}^{\alpha}a_{\tau}^{(\alpha)} I_{m-\tau+1}^{(1)}.
\end{align}
Regarding the coefficients $a_{\nu}^{(\tau)}$, the following property holds:
\begin{lemma}\label{lem:extra2}
For any $1\leq \tau \leq \alpha$, we have
\[ \sum_{\nu=1}^{\tau}a_{\nu}^{(\tau)} = 1\qquad \text{and}\qquad \sum_{\nu=1}^{\tau}a_{\nu}^{(\tau)}b^{w(\nu-1)} = 0 \qquad \text{for $1\leq w\leq \tau-1$}. \]
\end{lemma}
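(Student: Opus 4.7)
The plan is to use Lemma~\ref{lem:extra} as a black box and to probe the coefficients $a_\nu^{(\tau)}$ by feeding carefully chosen ``test sequences'' $(I^{(1)}_n)_n$ into both sides of the identity
\[ I^{(\tau)}_{n} = \sum_{\nu=1}^{\tau}a_{\nu}^{(\tau)} I_{n+1-\nu}^{(1)}. \]
Since the extrapolation recursion $I^{(\sigma+1)}_n=(b^\sigma I^{(\sigma)}_n-I^{(\sigma)}_{n-1})/(b^\sigma-1)$ is linear in the input, it suffices to compute $I^{(\tau)}_n$ for two explicit choices of $(I^{(1)}_n)_n$ and then read off the two identities.

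For the first identity, I would take $I^{(1)}_n\equiv 1$ for all $n$. A one-line induction on $\sigma$ shows that $I^{(\sigma)}_n=1$ for every $\sigma$ and $n$ (indeed, if $I^{(\sigma)}_n=I^{(\sigma)}_{n-1}=1$, then $I^{(\sigma+1)}_n=(b^\sigma-1)/(b^\sigma-1)=1$). Substituting into Lemma~\ref{lem:extra} immediately yields $\sum_{\nu=1}^{\tau}a_{\nu}^{(\tau)}=1$.

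For the second identity, fix $1\leq w\leq \tau-1$ and take $I^{(1)}_n=b^{-wn}$. The key observation is that the order-$(w+1)$ extrapolation step is designed exactly to annihilate a pure $b^{-wn}$-tail. Concretely, I would prove by induction on $\sigma\leq w$ that $I^{(\sigma)}_n=c_\sigma b^{-wn}$ for some constants $c_\sigma$ with $c_1=1$; then the next step gives
\[ I^{(w+1)}_n=\frac{b^w\,c_w b^{-wn}-c_w b^{-w(n-1)}}{b^w-1}=\frac{c_w b^{-wn}(b^w-b^w)}{b^w-1}=0, \]
so that $I^{(\sigma)}_n=0$ for all $\sigma\geq w+1$, and in particular $I^{(\tau)}_n=0$. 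Applying Lemma~\ref{lem:extra} to this vanishing left-hand side and factoring out $b^{-w(n+1)}$ (and then a further $b^{-w}$) gives $\sum_{\nu=1}^{\tau}a_{\nu}^{(\tau)}b^{w(\nu-1)}=0$, as required.

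I do not anticipate any real obstacle here: the only mildly delicate point is verifying that the order-$(w+1)$ step genuinely produces a zero rather than a nonzero multiple of $b^{-wn}$, which is precisely the motivating algebraic identity $b^w-b^w=0$ behind Richardson extrapolation. Everything else is bookkeeping and linearity.
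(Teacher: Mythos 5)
Your proposal is correct, and it takes a genuinely different route from the paper. The paper proves the lemma by induction on $\tau$, reusing the coefficient recursions $a_1^{(\tau+1)}=\tfrac{b^\tau}{b^\tau-1}a_1^{(\tau)}$, $a_{\tau+1}^{(\tau+1)}=-\tfrac{1}{b^\tau-1}a_\tau^{(\tau)}$, and $a_\nu^{(\tau+1)}=\tfrac{b^\tau}{b^\tau-1}a_\nu^{(\tau)}-\tfrac{1}{b^\tau-1}a_{\nu-1}^{(\tau)}$ (for $2\leq\nu\leq\tau$) that were derived inside the proof of Lemma~\ref{lem:extra}, and then mechanically collapses the sums via the identity $\tfrac{b^{\tau+w(\nu-1)}}{b^\tau-1}-\tfrac{b^{w\nu}}{b^\tau-1}=\tfrac{b^\tau-b^w}{b^\tau-1}b^{w(\nu-1)}$. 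You instead treat Lemma~\ref{lem:extra} as a black box, exploit the linearity of the recursion $I^{(\sigma+1)}_n=(b^\sigma I^{(\sigma)}_n-I^{(\sigma)}_{n-1})/(b^\sigma-1)$, and probe with the two eigendirections: the constant sequence $I^{(1)}_n\equiv 1$ (a fixed point of every step) yields $\sum_\nu a_\nu^{(\tau)}=1$, while the geometric sequence $I^{(1)}_n=b^{-wn}$ is annihilated at step $w+1$ because $c_{w+1}=c_w(b^w-b^w)/(b^w-1)=0$, forcing $I^{(\tau)}_n=0$ for all $\tau\geq w+1$ and hence $\sum_\nu a_\nu^{(\tau)}b^{w(\nu-1)}=0$ after factoring out $b^{-w(n+1)}$ and dividing by $b^w$. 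Your argument is more conceptual: it makes the annihilation property of Richardson extrapolation the visible driver of both identities, whereas the paper's proof is a self-contained but opaque coefficient computation. The one stylistic nit is that the form $I^{(\sigma)}_n=c_\sigma b^{-wn}$ in fact persists for all $\sigma$, not merely $\sigma\leq w$; it is just that $c_\sigma=0$ once $\sigma\geq w+1$, so phrasing the induction as ``for $\sigma\leq w$'' is fine but unnecessary.
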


\begin{proof}
We prove the lemma by induction on $\tau$.
As $a_1^{(1)}=1$, the case $\tau=1$ is trivial. 
Suppose that the claim of this lemma holds for some $1\leq \tau<\alpha$.
Using the recursions appearing in the proof of Lemma~\ref{lem:extra}, we have
\begin{align*}
\sum_{\nu=1}^{\tau+1}a_{\nu}^{(\tau+1)} & = a_{1}^{(\tau+1)}+\sum_{\nu=2}^{\tau}a_{\nu}^{(\tau+1)}+a_{\tau+1}^{(\tau+1)} \\
& = \frac{b^\tau}{b^\tau-1} a_1^{(\tau)}+\sum_{\nu=2}^{\tau}\left( \frac{b^\tau}{b^\tau-1}a_{\nu}^{(\tau)}- \frac{1}{b^\tau-1}a_{\nu-1}^{(\tau)} \right) - \frac{1}{b^\tau-1} a_{\tau}^{(\tau)}\\
& = \sum_{\nu=1}^{\tau}\left( \frac{b^\tau}{b^\tau-1}-\frac{1}{b^\tau-1}\right)a_{\nu}^{(\tau)} = \sum_{\nu=1}^{\tau}a_{\nu}^{(\tau)}=1.
\end{align*}
Similarly, for $1\leq w\leq \tau$, we have
\begin{align*}
\sum_{\nu=1}^{\tau+1}a_{\nu}^{(\tau+1)}b^{w(\nu-1)} & = a_{1}^{(\tau+1)}+\sum_{\nu=2}^{\tau}a_{\nu}^{(\tau+1)}b^{w(\nu-1)}+a_{\tau+1}^{(\tau+1)}b^{w\tau} \\
& = \frac{b^\tau}{b^\tau-1} a_1^{(\tau)}+\sum_{\nu=2}^{\tau}\left( \frac{b^{\tau+w(\nu-1)}}{b^\tau-1}a_{\nu}^{(\tau)}- \frac{b^{w(\nu-1)}}{b^\tau-1}a_{\nu-1}^{(\tau)} \right) - \frac{b^{w\tau}}{b^\tau-1} a_{\tau}^{(\tau)}\\
& = \sum_{\nu=1}^{\tau}\left( \frac{b^{\tau+w(\nu-1)}}{b^\tau-1}-\frac{b^{w\nu}}{b^\tau-1}\right)a_{\nu}^{(\tau)} \\
& = \frac{b^\tau-b^w}{b^\tau-1}\sum_{\nu=1}^{\tau}a_{\nu}^{(\tau)}b^{w(\nu-1)}=0,
\end{align*}
where the last equality follows from the induction assumption for $1\leq w\leq \tau-1$, and is trivial for $w=\tau$.
\end{proof}

Using these results, we further have the following.
\begin{corollary}\label{cor:extra}
Using the notation above, we have
\[ I^{(\alpha)}_{m} = c_0 + \sum_{\tau=1}^{\alpha}a_{\tau}^{(\alpha)}R_{\alpha,b^{m-\tau+1}}. \]
\end{corollary}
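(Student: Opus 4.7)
The plan is to combine the two lemmas just proved. By Lemma~\ref{lem:extra} applied with $\tau=\alpha$ and $n=m$, I can write
\[ I^{(\alpha)}_m = \sum_{\nu=1}^{\alpha} a_{\nu}^{(\alpha)} I^{(1)}_{m+1-\nu}, \]
so the proof reduces to substituting the asymptotic expansion \eqref{eq:extra_seq} for each term $I^{(1)}_{m+1-\nu}$ and checking that every contribution other than $c_0$ and the remainder sum cancels.

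Concretely, after substitution, the right-hand side becomes
\[ c_0 \sum_{\nu=1}^{\alpha} a_{\nu}^{(\alpha)} \;+\; \sum_{w=1}^{\alpha-1} c_w \sum_{\nu=1}^{\alpha} \frac{a_{\nu}^{(\alpha)}}{b^{w(m+1-\nu)}} \;+\; \sum_{\nu=1}^{\alpha} a_{\nu}^{(\alpha)} R_{\alpha,b^{m+1-\nu}}. \]
The key algebraic step is to rewrite $b^{-w(m+1-\nu)} = b^{-wm}\, b^{w(\nu-1)}$ so that the factor $b^{-wm}$ pulls out of the inner sum, leaving $\sum_{\nu=1}^{\alpha} a_{\nu}^{(\alpha)} b^{w(\nu-1)}$. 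Then Lemma~\ref{lem:extra2} finishes the argument: the first identity gives $\sum_{\nu=1}^{\alpha} a_{\nu}^{(\alpha)} = 1$, so the $c_0$ term survives intact, and the second identity gives $\sum_{\nu=1}^{\alpha} a_{\nu}^{(\alpha)} b^{w(\nu-1)} = 0$ for each $1 \leq w \leq \alpha-1$, so every polynomial correction term vanishes. Relabeling $\nu=\tau$ in the remaining remainder sum yields precisely the claimed expression.

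There is no genuine obstacle here — once Lemmas~\ref{lem:extra} and \ref{lem:extra2} are in hand, the corollary is essentially a bookkeeping verification that the extrapolation coefficients $a_\nu^{(\alpha)}$ annihilate the first $\alpha-1$ terms of the expansion by construction. The only mild care needed is tracking the shift from index $n=m+1-\nu$ back to a factor of $b^{w(\nu-1)}$ to match the hypothesis of Lemma~\ref{lem:extra2}.
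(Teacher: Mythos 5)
Your proof is correct and follows exactly the paper's argument: apply Lemma~\ref{lem:extra} at $\tau=\alpha$, $n=m$, substitute \eqref{eq:extra_seq}, factor out $b^{-wm}$, and invoke both identities of Lemma~\ref{lem:extra2} to annihilate the $c_w$ terms and preserve $c_0$. Nothing to add.
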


\begin{proof}
Plugging the expression \eqref{eq:extra_seq} into \eqref{eq:extra_final} and then using Lemma~\ref{lem:extra2}, we have
\begin{align*}
 I^{(\alpha)}_{m} & = \sum_{\tau=1}^{\alpha}a_{\tau}^{(\alpha)} \left( c_0 + \sum_{w=1}^{\alpha-1}\frac{c_w}{b^{w(m-\tau+1)}} + R_{\alpha,b^{m-\tau+1}}\right) \\
& = c_0 \sum_{\tau=1}^{\alpha}a_{\tau}^{(\alpha)} + \sum_{w=1}^{\alpha-1}\sum_{\tau=1}^{\alpha}a_{\tau}^{(\alpha)}\frac{c_w}{b^{w(m-\tau+1)}}+ \sum_{\tau=1}^{\alpha}a_{\tau}^{(\alpha)}R_{\alpha,b^{m-\tau+1}} \\
& = c_0 \sum_{\tau=1}^{\alpha}a_{\tau}^{(\alpha)} + \sum_{w=1}^{\alpha-1}\frac{c_w}{b^{wm}}\sum_{\tau=1}^{\alpha}a_{\tau}^{(\alpha)}b^{w(\tau-1)}+ \sum_{\tau=1}^{\alpha}a_{\tau}^{(\alpha)}R_{\alpha,b^{m-\tau+1}}\\
& = c_0 + \sum_{\tau=1}^{\alpha}a_{\tau}^{(\alpha)}R_{\alpha,b^{m-\tau+1}}.
\end{align*}
This completes the proof.
\end{proof}

\section{Extrapolated polynomial lattice rules}\label{sec:explr}

The main idea for coming up with extrapolated polynomial lattice rules is to look at the approximate value of a polynomial lattice rule, as shown in \eqref{eq:poly_lattice_error}, in the following way:
\begin{align*}
I(f; P(p,\bsq)) & = I(f)+ \sum_{\substack{\bsk\in P^{\perp}(p,\bsq)\setminus \{\bszero\}\\ \exists j\colon b^m\nmid k_j}} \hat{f}(\bsk)+\sum_{\substack{\bsk\in P^{\perp}(p,\bsq)\setminus \{\bszero\}\\ \forall j\colon b^m\mid k_j}} \hat{f}(\bsk) \\
& = I(f) + \sum_{\substack{\bsk\in P^{\perp}(p,\bsq)\setminus \{\bszero\}\\ \exists j\colon b^m\nmid k_j}} \hat{f}(\bsk)+\sum_{\bsk\in \NN_0^s\setminus \{\bszero\}} \hat{f}(b^m\bsk),
\end{align*}
where the second equality follows from Remark~\ref{rem:poly_lattice}.
By considering the character property of regular grids
\[ P_{\grid,b^m} = \left\{ \left( \frac{n_1}{b^m},\ldots,\frac{n_s}{b^m}\right)\in [0,1)^s\colon 0\leq n_1,\ldots,n_s< b^m\right\},\]
we see that the third term in the last expression is nothing but the approximation error of $f$ when using $P_{\grid,b^m}$ as quadrature nodes in a QMC integration.
Therefore we have
\[ I(f; P(p,\bsq)) = I(f) + \sum_{\substack{\bsk\in P^{\perp}(p,\bsq)\setminus \{\bszero\}\\ \exists j\colon b^m\nmid k_j}} \hat{f}(\bsk)+ \left(I(f;P_{\grid,b^m}) -I(f)\right). \]
Plugging in the Euler-Maclaurin formula for $I(f;P_{\grid,b^m})$, which is shown later in Theorem~\ref{thm:euler-maclaurin}, into the right-hand side above, we obtain
\begin{align}\label{eq:decomp}
 I(f; P(p,\bsq)) = I(f) + \sum_{\substack{\bsk\in P^{\perp}(p,\bsq)\setminus \{\bszero\}\\ \exists j\colon b^m\nmid k_j}} \hat{f}(\bsk)+ \sum_{\tau=1}^{\alpha -1}\frac{c_{\tau}(f)}{b^{\tau m}} + R_{s,\alpha,b^m}, 
\end{align}
where $c_{\tau}(f)$ depends only on $f$ and $\tau$, and the remainder term $R_{s,\alpha,b^m}$ is proven to decay with order $b^{-\alpha m}$.

Now suppose that we have polynomial lattice rules with consecutive sizes of nodes, $b^{m-\alpha+1},b^{m-\alpha+2},\ldots,b^m$.
For ease of notation, we denote by $P_{b^n}$ a polynomial lattice point set with the number of nodes equal to $b^n$, and by $P^{\perp}_{b^n}$ the dual polynomial lattice of $P_{b^n}$.
Then we can obtain a chain of $\alpha$ approximate values of the integral, i.e., $I(f;P_{b^{m-\alpha+1}}),\ldots,I(f;P_{b^m})$.
By applying Richardson extrapolation in a recursive way as described in Section~\ref{subsec:extrapolation}, it follows from Lemma~\ref{lem:extra}, Corollary~\ref{cor:extra} and \eqref{eq:decomp} that the final value is given by
\begin{align}\label{eq:extra_approximation}
 \sum_{\tau=1}^{\alpha}a_{\tau}^{(\alpha)} I(f;P_{b^{m-\tau+1}}) = I(f) + \sum_{\tau=1}^{\alpha}a_{\tau}^{(\alpha)}\left( \sum_{\substack{\bsk\in P^{\perp}_{b^{m-\tau+1}}\setminus \{\bszero\}\\ \exists j\colon b^{m-\tau+1}\nmid k_j}} \hat{f}(\bsk)+ R_{s,\alpha,b^{m-\tau+1}}\right).
\end{align}
If we can construct good polynomial lattice rules such that the inner sum on the right-hand side of \eqref{eq:extra_approximation} decays with order $b^{-(\alpha-\epsilon)m}$ (with arbitrarily small $\epsilon >0$) for any function $f\in W_{s,\alpha,\bsgamma,q,r}$, the integration error
\[\sum_{\tau=1}^{\alpha}a_{\tau}^{(\alpha)} I(f;P_{b^{m-\tau+1}})-I(f)\]
decays with the almost optimal order.
(Note that we use $N=b^{m-\alpha+1}+\cdots + b^m$ quadrature nodes in total, which does not affects the order of convergence.)
This is our key observation for introducing extrapolated polynomial lattice rules.

In what follows, we start with showing the worst-case error bound of extrapolated polynomial lattice rules, and then in Section~\ref{subsec:euler-maclaurin}, we prove the Euler-Maclaurin formula on the regular grid quadrature.
In Section~\ref{subsec:existence}, we prove the existence of such good polynomial lattice rules for $W_{s,\alpha,\bsgamma,q,r}$ with general weights $\bsgamma=(\gamma_u)_{u\subset \NN}$.
In Section~\ref{sec:cbc}, by restricting to product weights, i.e., the case where the weights are given by the form $\gamma_u=\prod_{j\in u}\gamma_j$ for a sequence of reals $(\gamma_j)_{j\in \NN}$, we show that good polynomial lattice rules can be constructed by the fast component-by-component (CBC) algorithm.

\subsection{Worst-case error bound}\label{subsec:worst-case}
Using the equality \eqref{eq:extra_approximation}, the absolute integration error of an extrapolated polynomial lattice rule is bounded by
\begin{align}\label{eq:bound_error}
 \left| \sum_{\tau=1}^{\alpha}a_{\tau}^{(\alpha)} I(f;P_{b^{m-\tau+1}}) -I(f)\right| \leq \sum_{\tau=1}^{\alpha}|a_{\tau}^{(\alpha)}|\left( \sum_{\substack{\bsk\in P^{\perp}_{b^{m-\tau+1}}\setminus \{\bszero\}\\ \exists j\colon b^{m-\tau+1}\nmid k_j}} |\hat{f}(\bsk)|+ |R_{s,\alpha,b^{m-\tau+1}}|\right). 
\end{align}
In the following, we write
\[ P_{b^{m-\tau+1},u}^{\perp}= \left\{\bsk_u\in \NN^{|u|} \colon (\bsk_u,\bszero)\in P_{b^{m-\tau+1}}^{\perp} \right\}, \]
for a subset $\emptyset \neq u\subseteq \{1,\ldots,s\}$. Note that we have 
\[ P_{b^{m-\tau+1}}^{\perp}\setminus \{\bszero\} = \bigcup_{\emptyset \neq u\subseteq \{1,\ldots,s\}}P_{b^{m-\tau+1},u}^{\perp}. \]
We now obtain a worst-case error bound as follows.

\begin{theorem}\label{thm:bound_wrst_error}
Let $\alpha,s\in \NN$, $\alpha\geq 2$, $1\leq q,r\leq \infty$, and let $\bsgamma=(\gamma_u)_{u\subset \NN}$ be a set of weights.
Let $q'$ and $r'$ be the H\"older conjugates of $q$ and $r$, respectively.
For $m\geq \alpha$, we have
\begin{align*}
& \sup_{\substack{f\in W_{s,\alpha,\bsgamma,q,r}\\ \|f\|_{s,\alpha,\bsgamma,q,r}\leq 1}}\left| \sum_{\tau=1}^{\alpha}a_{\tau}^{(\alpha)} I(f;P_{b^{m-\tau+1}}) -I(f)\right| \\
& \qquad \qquad \leq \sum_{\tau=1}^{\alpha}|a_{\tau}^{(\alpha)}| \left( B_{\bsgamma,r}(P_{b^{m-\tau+1}})+\frac{H_{s,\bsgamma,q,r}}{b^{\alpha(m-\tau+1)}}\right),
\end{align*}
where 
\[ B_{\bsgamma,r}(P_{b^{m-\tau+1}}) = \Bigg(\sum_{\emptyset \neq u\subseteq \{1,\ldots,s\}}\Bigg(\gamma_uC_{\alpha}^{|u|}\sum_{\substack{\bsk_u\in P_{b^{m-\tau+1},u}^{\perp}\\ \exists j\in u\colon b^m\nmid k_j}}b^{-\mu_{\alpha}(\bsk_u)}\Bigg)^{r'} \Bigg)^{1/r'}, \]
and 
\[ H_{s,\bsgamma,q,r} = \Bigg(\sum_{u\subseteq \{1,\ldots,s\}}\gamma_u^{r'}(\alpha+1)^{|u|r'/q'}D_{\alpha}^{r'|u|}\Bigg)^{1/r'}, \]
with $D_{\alpha}=\max\left\{|b_1|,\ldots,|b_{\alpha-1}|,\sup_{x\in [0,1)}|\tilde{b}_{\alpha}(x)|\right\}.$
\end{theorem}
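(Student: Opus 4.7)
The plan is to bound each of the two contributions inside the $\tau$-sum on the right-hand side of \eqref{eq:bound_error} separately, uniformly over the unit ball of $W_{s,\alpha,\bsgamma,q,r}$: the truncated Walsh sum should produce $B_{\bsgamma,r}(P_{b^{m-\tau+1}})$ and the Euler--Maclaurin remainder $|R_{s,\alpha,b^{m-\tau+1}}|$ should produce $H_{s,\bsgamma,q,r}/b^{\alpha(m-\tau+1)}$. Summing these over $\tau$ against $|a_\tau^{(\alpha)}|$ then yields the claim.

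For the dual-lattice Walsh sum, I would first partition $P^\perp_{b^{m-\tau+1}}\setminus\{\bszero\}$ according to the support $u$ of $\bsk$, writing $\bsk=(\bsk_u,\bszero)$ with $\bsk_u\in P^\perp_{b^{m-\tau+1},u}$. Since $b^{m-\tau+1}\mid b^m$, the condition ``$\exists j\colon b^{m-\tau+1}\nmid k_j$'' implies ``$\exists j\colon b^m\nmid k_j$'', so enlarging the index set only loosens the bound and matches the definition of $B_{\bsgamma,r}$. Lemma~\ref{lem:walsh_bound} gives $|\hat f(\bsk_u,\bszero)|\le \gamma_u C_\alpha^{|u|}b^{-\mu_\alpha(\bsk_u)}\|f_u\|_{s,\alpha,\bsgamma,q,r}$. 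A single application of Hölder's inequality over $u$ with conjugate exponents $r,r'$, combined with the norm identity $(\sum_u\|f_u\|_{s,\alpha,\bsgamma,q,r}^r)^{1/r}=\|f\|_{s,\alpha,\bsgamma,q,r}\le 1$ from Lemma~\ref{lem:func_represent}, then factors the sum as $\|f\|_{s,\alpha,\bsgamma,q,r}\cdot B_{\bsgamma,r}(P_{b^{m-\tau+1}})$.

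For the remainder, I would invoke the Euler--Maclaurin formula for regular grid quadrature (Theorem~\ref{thm:euler-maclaurin}) to express $R_{s,\alpha,b^{m-\tau+1}}$ as a linear combination, indexed by $u\subseteq\{1,\ldots,s\}$, $v\subseteq u$ (with $v\ne\emptyset$ furnishing the $b^{-\alpha(m-\tau+1)}$ decay), and $\bstau_{u\setminus v}\in\{1,\ldots,\alpha\}^{|u\setminus v|}$, of products of Bernoulli numbers $b_{\tau_j}$ with integrals of the mixed derivative $f^{(\bstau_{u\setminus v},\bsalpha_v,\bszero)}$ against $\prod_{j\in v}\tilde b_\alpha(\cdot)$. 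Each Bernoulli or periodic-Bernoulli factor is uniformly bounded by $D_\alpha$, yielding an overall $D_\alpha^{|u|}$ per summand. A Hölder estimate in the relevant grid coordinates extracts the factor $\|f_u\|_{s,\alpha,\bsgamma,q,r}$, while a discrete Hölder inequality across the at most $(\alpha+1)^{|u|}=\sum_{v\subseteq u}\alpha^{|u\setminus v|}$ summands (using $1/q+1/q'=1$) contributes the count factor $(\alpha+1)^{|u|/q'}$. A final Hölder step in $r$ over $u$, together with Lemma~\ref{lem:func_represent}, then furnishes $H_{s,\bsgamma,q,r}/b^{\alpha(m-\tau+1)}$ times $\|f\|_{s,\alpha,\bsgamma,q,r}\le 1$.

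The main obstacle will be the remainder step: it rests on the Euler--Maclaurin expansion of Theorem~\ref{thm:euler-maclaurin}, which is only proved in the following subsection, and the combinatorial bookkeeping must be arranged so that the two nested Hölder inequalities (first across the multi-indices $v,\bstau_{u\setminus v}$, then across $u$) align with the exact constants $(\alpha+1)^{|u|r'/q'}$ and $D_\alpha^{r'|u|}$ in $H_{s,\bsgamma,q,r}$; in particular one must confirm that restricting to $v\ne\emptyset$ in the remainder is compatible with the crude count $(\alpha+1)^{|u|}$ used in the Hölder step. The Walsh-sum step is, by contrast, a routine decomposition by support followed by a single Hölder inequality, and combining both estimates in \eqref{eq:bound_error} finishes the proof.
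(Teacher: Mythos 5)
Your proposal is correct and mirrors the paper's argument: decompose via the triangle inequality into the truncated Walsh sum and the Euler--Maclaurin remainder, bound the former using Lemma~\ref{lem:walsh_bound}, the set enlargement $b^{m-\tau+1}\nmid k_j\Rightarrow b^m\nmid k_j$, and a single H\"older inequality over $u$ with exponents $r,r'$, and for the latter simply invoke the remainder estimate $|R_{s,\alpha,N}|\le \|f\|_{s,\alpha,\bsgamma,q,r}H_{s,\bsgamma,q,r}/N^\alpha$ supplied by Theorem~\ref{thm:euler-maclaurin}. Your stated worry about restricting to $v\ne\emptyset$ is moot: the remainder $R_{s,\alpha,N}$ collects the terms with $|\bstau_{u\setminus v}|_1+\alpha|v|\ge\alpha$ (which includes $v=\emptyset$ whenever $|\bstau_u|_1\ge\alpha$), and after extracting the factor $N^{-\alpha}$ the paper crudely sums over \emph{all} pairs $(v,\bstau_{u\setminus v})$, which is precisely where the count $(\alpha+1)^{|u|}=\sum_{v\subseteq u}\alpha^{|u\setminus v|}$ comes from, so there is no compatibility issue to check.
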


\begin{proof}
Let us consider the inner sum on the right-hand side of \eqref{eq:bound_error} first.
Using the bound on the Walsh coefficient in Lemma~\ref{lem:walsh_bound} and H\"older inequality, we have
\begin{align*}
\sum_{\substack{\bsk\in P^{\perp}_{b^{m-\tau+1}}\setminus \{\bszero\}\\ \exists j\colon b^{m-\tau+1} \nmid k_j}} |\hat{f}(\bsk)| & = \sum_{\emptyset \neq u\subseteq \{1,\ldots,s\}}\sum_{\substack{\bsk_u\in P^{\perp}_{b^{m-\tau+1},u}\setminus \{\bszero\}\\ \exists j\in u\colon b^{m-\tau+1}\nmid k_j}} |\hat{f}(\bsk_u,\bszero)| \\
& \leq \sum_{\emptyset \neq u\subseteq \{1,\ldots,s\}}\|f_u\|_{s,\alpha,\bsgamma,q,r}\gamma_uC_{\alpha}^{|u|} \sum_{\substack{\bsk_u\in P^{\perp}_{b^{m-\tau+1},u}\setminus \{\bszero\}\\ \exists j\in u\colon b^{m-\tau+1}\nmid k_j}} b^{-\mu_{\alpha}(\bsk_u)} \\
& \leq \Bigg( \sum_{\emptyset \neq u\subseteq \{1,\ldots,s\}}\|f_u\|_{s,\alpha,\bsgamma,q,r}^r\Bigg)^{1/r} \\
& \qquad \times \Bigg( \sum_{\emptyset \neq u\subseteq \{1,\ldots,s\}}\Bigg(\gamma_uC_{\alpha}^{|u|} \sum_{\substack{\bsk_u\in P^{\perp}_{b^{m-\tau+1},u}\setminus \{\bszero\}\\ \exists j\in u\colon b^{m-\tau+1}\nmid k_j}} b^{-\mu_{\alpha}(\bsk_u)}\Bigg)^{r'}\Bigg)^{1/r'} \\
& \leq \|f\|_{s,\alpha,\bsgamma,q,r}B_{\bsgamma,r}(P_{b^{m-\tau+1}}).
\end{align*}
Regarding the bound on $R_{\alpha,b^{m-\tau+1}}$, it follows from Theorem~\ref{thm:euler-maclaurin} below that
\[ |R_{s,\alpha,b^{m-\tau+1}}| \leq \frac{\|f\|_{s,\alpha,\bsgamma,q,r} H_{s,\bsgamma,q,r}}{b^{\alpha(m-\tau+1)}}.\]
Plugging these bounds into the right-hand side of \eqref{eq:bound_error} and then taking the supremum among $f\in W_{s,\alpha,\bsgamma,q,r}$ such that $\|f\|_{s,\alpha,\bsgamma,q,r}\leq 1$, the result follows. 
\end{proof}

\begin{remark}
As already pointed out in \cite{DKLNS14}, since we have $B_{\bsgamma,r}(P_{b^{m-\tau+1}})\leq B_{\bsgamma,\infty}(P_{b^{m-\tau+1}})$ and $H_{\bsgamma,q,r}\leq H_{\bsgamma,q,\infty}$ for any $r$, it is convenient to work with an upper bound which can be obtained by setting $r=\infty$ and thus $r'=1$. In the rest of this paper, we always consider the case $r=\infty$. The bound $B_{\bsgamma, r}$ is used below to construct good generating vectors for polynomial lattice rules. The choice $r'=1$ simplifies the computation of $B_{\bsgamma, r}$.
\end{remark}
\subsection{Euler-Maclaurin formula for regular grid quadrature}\label{subsec:euler-maclaurin}
Here we show the Euler-Maclaurin formula on $I(f;P_{\grid,N})$, where
\[ P_{\grid,N} = \left\{ \left( \frac{n_1}{N},\ldots,\frac{n_s}{N}\right)\in [0,1)^s\colon 0\leq n_1,\ldots,n_s< N\right\}. \]
As preparation, we prove the following lemma.
\begin{lemma}\label{lem:bernoulli_sum}
For $\tau, N\in \NN$ and $x\in [0,1)$, we have
\[ \frac{1}{N}\sum_{n=0}^{N-1}b_{\tau}\left( \frac{n}{N}\right) = \frac{b_{\tau}}{N^{\tau}} \quad \text{and} \quad \frac{1}{N}\sum_{n=0}^{N-1}\tilde{b}_{\tau}\left( x-\frac{n}{N}\right) = \frac{\tilde{b}_{\tau}(Nx)}{N^{\tau}}.\]
\end{lemma}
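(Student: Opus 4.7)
The plan is to deduce both identities from a single classical fact: the Raabe multiplication formula for Bernoulli polynomials, which in our normalization reads
\[ b_{\tau}(Nx) = N^{\tau-1}\sum_{n=0}^{N-1} b_{\tau}\!\left( x + \frac{n}{N}\right). \]
Dividing by $N^{\tau}$ immediately gives
\[ \frac{1}{N}\sum_{n=0}^{N-1} b_{\tau}\!\left( x + \frac{n}{N}\right) = \frac{b_{\tau}(Nx)}{N^{\tau}}, \]
and specializing to $x = 0$ (and recalling $b_{\tau}(0) = b_{\tau}$) yields the first claimed identity.

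For the second identity, I would first observe that by the one-periodicity of $\tilde{b}_{\tau}$ we have, for every $x\in[0,1)$,
\[ \sum_{n=0}^{N-1}\tilde{b}_{\tau}\!\left( x - \frac{n}{N}\right) = \sum_{n=0}^{N-1}\tilde{b}_{\tau}\!\left( x + \frac{n}{N}\right), \]
since as $n$ runs over $\{0,1,\ldots,N-1\}$, the residues $-n/N \bmod 1$ and $n/N \bmod 1$ trace out the same multiset $\{0,1/N,\ldots,(N-1)/N\}$. Then I would apply the Raabe formula above to the periodic version $\tilde{b}_{\tau}$ rather than to $b_{\tau}$; this is legitimate because both sides of the Raabe formula, viewed as functions of $x$, are continuous (for $\tau\geq 2$) or have matching jump behaviour (for $\tau=1$) and agree on $[0,1)$, so their one-periodic extensions agree everywhere. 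Dividing by $N$ then produces the stated right-hand side $\tilde{b}_{\tau}(Nx)/N^{\tau}$.

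This leaves establishing the Raabe formula itself, which is standard but worth including in a couple of lines. I would prove it by induction on $\tau$: the case $\tau = 0$ is trivial since $b_0 \equiv 1$, and for the inductive step I would differentiate both sides of the proposed identity with respect to $x$, use the well-known relation $b_{\tau}'(x) = b_{\tau-1}(x)$ to reduce to the inductive hypothesis, and then fix the constant of integration by integrating both sides over $x\in[0,1)$ and invoking $\int_0^1 b_{\tau}(x)\rd x = 0$ for $\tau\geq 1$.

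The only mildly delicate point is the reindexing in the second identity (justifying the replacement of $x - n/N$ by $x + n/N$ under $\tilde{b}_{\tau}$), but this is purely a periodicity argument; the genuine content of the lemma is carried entirely by the Raabe multiplication formula, and the rest is bookkeeping.
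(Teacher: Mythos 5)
Your route is correct but genuinely different from the paper's. The paper's proof (restricted to $\tau\geq 2$) expands $b_\tau$ and $\tilde{b}_\tau$ in their Fourier series, interchanges the two sums, and uses that $\frac{1}{N}\sum_{n=0}^{N-1}e^{2\pi i hn/N}$ equals $1$ if $N\mid h$ and $0$ otherwise; absolute convergence of the Fourier series is what licenses the interchange, and the case $\tau=1$, where the series is only conditionally convergent, is dispatched separately by a direct computation. Your argument via the Raabe multiplication formula avoids Fourier analysis entirely and treats all $\tau\geq 1$ uniformly, at the cost of having to establish Raabe's formula itself; both approaches are standard and about equally economical.

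Two spots in your sketch need tightening. First, ``periodize both sides of the Raabe identity and note they agree on $[0,1)$'' is not quite the right justification: for $x\in[0,1)$ and $n\geq 1$ the shifted argument $x+n/N$ can lie in $[1,2)$, where $\tilde{b}_\tau$ and $b_\tau$ disagree, so the two sides are not manifestly equal on all of $[0,1)$. The clean observation is that both $\sum_{n=0}^{N-1}\tilde{b}_\tau(x+n/N)$ and $\tilde{b}_\tau(Nx)$ are $1/N$-periodic in $x$, and they coincide with the respective sides of the polynomial Raabe identity on $[0,1/N)$, hence agree everywhere; your reindexing step (replacing $x-n/N$ by $x+n/N$) then finishes. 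Second, when fixing the constant of integration in the Raabe induction, integrate over an interval of length $1/N$ rather than over $[0,1)$: then $\int_0^{1/N}b_\tau(Nx)\rd x = \frac{1}{N}\int_0^1 b_\tau = 0$ and $\int_0^{1/N}\sum_{n=0}^{N-1} b_\tau\bigl(x+\tfrac{n}{N}\bigr)\rd x = \int_0^1 b_\tau = 0$, so the constant vanishes. Integrating the left-hand side over $[0,1)$ instead gives $\frac{1}{N}\int_0^N b_\tau$, which is \emph{not} zero for $\tau\geq 1$ and $N\geq 2$, so $\int_0^1 b_\tau = 0$ alone does not directly pin down the constant there.
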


\begin{proof}
For $\tau=1$, we obtain the results by direct calculation, which is omitted here.
We assume $\tau\ge 2$.
By using the Fourier series of $b_{\tau}$, we have
\begin{align*}
\frac{1}{N}\sum_{n=0}^{N-1}b_{\tau}\left( \frac{n}{N}\right) & = \frac{1}{N}\sum_{n=0}^{N-1}\frac{-1}{(2\pi i)^{\tau}}\sum_{h\in \ZZ\setminus \{0\}}\frac{e^{2\pi i hn/N}}{h^{\tau}} \\
& = \frac{-1}{(2\pi i)^{\tau}}\sum_{h\in \ZZ\setminus \{0\}}\frac{1}{h^{\tau}}\left(\frac{1}{N}\sum_{n=0}^{N-1}e^{2\pi i hn/N}\right) \\
& = \frac{-1}{(2\pi i)^{\tau}}\sum_{\substack{h\in \ZZ\setminus \{0\}\\ N\mid h}}\frac{1}{h^{\tau}} = \frac{-1}{(2\pi i)^{\tau}}\sum_{h\in \ZZ\setminus \{0\}}\frac{1}{(hN)^{\tau}}= \frac{b_{\tau}}{N^{\tau}},
\end{align*}
which completes the proof of the first equality.
Since the second equality can be proven in exactly the same way by using the Fourier series of $\tilde{b}_{\tau}$, we omit the proof.
\end{proof}

As shown in Lemma~\ref{lem:func_represent}, we have the following pointwise representation for a function $f\in W_{s,\alpha,\bsgamma,q,r}$:
\begin{align}\label{eq:func_represent}
f(\bsy) & =  \sum_{u\subseteq \{1,\ldots,s\}}\sum_{v\subseteq u}\sum_{\bstau_{u\setminus v}\in \{1,\ldots,\alpha\}^{|u\setminus v|}}\prod_{j\in u\setminus v}b_{\tau_j}(y_j) \nonumber \\
& \qquad \times (-1)^{(\alpha+1)|v|}\int_{[0,1)^s} f^{(\bstau_{u\setminus v},\bsalpha_v,\bszero)}(\bsx) \prod_{j\in v}\tilde{b}_{\alpha}(x_j-y_j) \rd \bsx.
\end{align}
By using Lemma~\ref{lem:bernoulli_sum}, we obtain the Euler-Maclaurin formula on $I(f;P_{\grid,N})$.

\begin{theorem}\label{thm:euler-maclaurin}
For $f\in W_{s,\alpha,\bsgamma,q,r}$, we have
\[ I(f;P_{\grid,N}) = I(f)+\sum_{\tau=1}^{\alpha -1}\frac{c_{\tau}(f)}{N^{\tau}} + R_{s,\alpha,N}, \]
where $c_{\tau}(f)$ depends only on $f$ and $\tau$, and is given by
\[ c_{\tau}(f) =  \sum_{\substack{\bstau \in \{0,1,\ldots,\alpha-1\}^{s}\\ |\bstau|_1=\tau}}\prod_{j=1}^{s}b_{\tau_j} \int_{[0,1)^{s}}f^{(\bstau)}(\bsx)\rd \bsx \]
with $ |\bstau|_1 = \sum_{j=1}^{s}|\tau_j|$. Further we have 
\[ |R_{s,\alpha,N}| \leq \frac{\|f\|_{s,\alpha,\bsgamma,q,r} H_{s,\bsgamma,q,r}}{N^{\alpha}},\]
where $H_{s,\bsgamma,q,r}$ is given as in Theorem~\ref{thm:bound_wrst_error}.
\end{theorem}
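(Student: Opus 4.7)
The plan is to substitute the pointwise representation of $f$ from Lemma~\ref{lem:func_represent} into
\[ I(f;P_{\grid,N})=\frac{1}{N^s}\sum_{n_1,\ldots,n_s=0}^{N-1}f\!\left(\frac{n_1}{N},\ldots,\frac{n_s}{N}\right), \]
interchange the finite sums with the integral in $\bsx$, and then apply Lemma~\ref{lem:bernoulli_sum} separately in each coordinate. For fixed $u$, $v\subseteq u$, and $\bstau_{u\setminus v}$, the coordinate-wise averages yield $b_{\tau_j}/N^{\tau_j}$ for each $j\in u\setminus v$, $\tilde{b}_{\alpha}(Nx_j)/N^{\alpha}$ for each $j\in v$, and simply $1$ for each $j\notin u$ (since the summand is independent of $n_j$ there). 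This produces the expansion
\begin{align*}
I(f;P_{\grid,N})&=\sum_{u\subseteq\{1,\ldots,s\}}\sum_{v\subseteq u}\sum_{\bstau_{u\setminus v}\in\{1,\ldots,\alpha\}^{|u\setminus v|}}\!\!\prod_{j\in u\setminus v}\frac{b_{\tau_j}}{N^{\tau_j}}\cdot\frac{(-1)^{(\alpha+1)|v|}}{N^{\alpha|v|}}\\
&\qquad\times\int_{[0,1)^s}f^{(\bstau_{u\setminus v},\bsalpha_v,\bszero)}(\bsx)\prod_{j\in v}\tilde{b}_{\alpha}(Nx_j)\rd\bsx.
\end{align*}

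Next I would partition this expansion into three parts. The $u=\emptyset$ term equals $I(f)$. Among the terms with $v=\emptyset$ and $u\neq\emptyset$, those with $|\bstau_u|_1=\tau\in\{1,\ldots,\alpha-1\}$ automatically satisfy $\tau_j\leq\tau\leq\alpha-1$, and summing these with the convention $b_0=1$ (so that coordinates $j\notin u$ contribute a factor of $1$) reproduces $c_\tau(f)/N^\tau$ for each such $\tau$. The remaining terms---those with $|v|\geq 1$ or with $v=\emptyset$ and $|\bstau_u|_1\geq\alpha$---all carry a prefactor of order at most $N^{-\alpha}$; these constitute $R_{s,\alpha,N}$.

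To estimate $|R_{s,\alpha,N}|$, I would factor out $N^{-\alpha}$, bound $|b_{\tau_j}|\leq D_\alpha$ pointwise (which also holds for $\tau_j=\alpha$ since $|b_\alpha|=|\tilde{b}_\alpha(0)|\leq\sup|\tilde{b}_\alpha|\leq D_\alpha$), and reduce the inner integral to $\int_{[0,1)^{|v|}}g(\bsx_v)\prod_{j\in v}\tilde{b}_\alpha(Nx_j)\rd\bsx_v$, where $g(\bsx_v):=\int_{[0,1)^{s-|v|}}f^{(\bstau_{u\setminus v},\bsalpha_v,\bszero)}(\bsx)\rd\bsx_{-v}$. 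A H\"older application in $L^q\times L^{q'}$, combined with 1-periodicity of $\tilde{b}_\alpha$ (which gives $\|\tilde{b}_\alpha(N\cdot)\|_{L^{q'}}=\|\tilde{b}_\alpha\|_{L^{q'}}\leq D_\alpha$), bounds this integral by $D_\alpha^{|v|}A_{u,v,\bstau_{u\setminus v}}$, where $A_{u,v,\bstau_{u\setminus v}}$ is precisely the $L^q$-norm of $g$ appearing inside the definition of $\|f\|_{s,\alpha,\bsgamma,q,r}$.

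Finally, I would apply two further H\"older inequalities to reassemble the norm. First, over $(v,\bstau_{u\setminus v})$ with $1/q+1/q'=1$, using the cardinality identity $\sum_{v\subseteq u}\alpha^{|u\setminus v|}=(\alpha+1)^{|u|}$, to obtain a factor $(\alpha+1)^{|u|/q'}$ times the $L^q$-aggregate $\gamma_u\cdot(\gamma_u^{-q}\sum_{v,\bstau}A^q)^{1/q}$. Second, over $u$ with $1/r+1/r'=1$, pairing $\gamma_u D_\alpha^{|u|}(\alpha+1)^{|u|/q'}$ against the residual $\gamma_u^{-1}(\sum_{v,\bstau}A^q)^{1/q}$ to yield exactly $H_{s,\bsgamma,q,r}\cdot\|f\|_{s,\alpha,\bsgamma,q,r}/N^\alpha$. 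The main technical obstacle is the bookkeeping in these last two H\"older applications, making sure the index sets and exponents line up so that the weighted norm is reconstructed with the stated constant $H_{s,\bsgamma,q,r}$; everything prior reduces to Lemma~\ref{lem:bernoulli_sum} and straightforward comparison of index sets.
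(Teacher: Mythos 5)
Your proposal is correct and follows essentially the same route as the paper: substitute the pointwise representation of Lemma~\ref{lem:func_represent}, average coordinate-wise via Lemma~\ref{lem:bernoulli_sum}, partition the resulting expansion by the exponent $|\bstau_{u\setminus v}|_1+\alpha|v|$ (with the $v=\emptyset$, small-$\tau$ terms giving the $c_\tau(f)$ and the rest forming $R_{s,\alpha,N}$), and bound the remainder by $D_\alpha^{|u|}$ and a triple H\"older application reconstructing $H_{s,\bsgamma,q,r}\|f\|_{s,\alpha,\bsgamma,q,r}/N^\alpha$. The only cosmetic difference is your use of $\|\tilde{b}_\alpha(N\cdot)\|_{L^{q'}}\le D_\alpha$ in the inner H\"older step, where the paper first bounds the product of $\tilde{b}_\alpha$ factors pointwise by $D_\alpha^{|v|}$ and then pairs the remaining integral with a constant; both yield the same constant.
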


\begin{proof}
Plugging the representation \eqref{eq:func_represent} into $I(f;P_{\grid,N})$ and using Lemma~\ref{lem:bernoulli_sum}, we have
\begin{align*}
I(f;P_{\grid,N}) & = \frac{1}{N^s}\sum_{n_1=0}^{N-1}\cdots\sum_{n_s=0}^{N-1}f\left( \frac{n_1}{N},\ldots,\frac{n_s}{N}\right) \\
& = \sum_{u\subseteq \{1,\ldots,s\}}\sum_{v\subseteq u}\sum_{\bstau_{u\setminus v}\in \{1,\ldots,\alpha\}^{|u\setminus v|}}\prod_{j\in u\setminus v}\frac{1}{N}\sum_{n_j=0}^{N-1}b_{\tau_j}\left(\frac{n_j}{N}\right) \\
& \qquad \times (-1)^{(\alpha+1)|v|}\int_{[0,1)^s}f^{(\bstau_{u\setminus v},\bsalpha_{v},\bszero)}(\bsx) \prod_{j\in v}\frac{1}{N}\sum_{n_j=0}^{N-1}\tilde{b}_{\alpha}\left(x_j-\frac{n_j}{N}\right)\rd \bsx \\
& = \sum_{u\subseteq \{1,\ldots,s\}}\sum_{v\subseteq u}\sum_{\bstau_{u\setminus v}\in \{1,\ldots,\alpha\}^{|u\setminus v|}}\frac{1}{N^{|\bstau_{u\setminus v}|_1+\alpha|v|}}\prod_{j\in u\setminus v}b_{\tau_j} \\
& \qquad \times (-1)^{(\alpha+1)|v|}\int_{[0,1)^{s}}f^{(\bstau_{u\setminus v},\bsalpha_{v},\bszero)}(\bsx)\prod_{j\in v}\tilde{b}_{\alpha}(Nx_j)\rd \bsx.
\end{align*}
Let us reorder the summands with respect to the value of $|\bstau_{u\setminus v}|_1+\alpha|v|$, which appears in the exponent of $N$. If $|\bstau_{u\setminus v}|_1+\alpha|v|=0$, we must have $u=v=\emptyset$ and the corresponding summand is nothing but $I(f)$. If $|\bstau_{u\setminus v}|_1+\alpha|v|=\tau$ with $1\leq \tau<\alpha$, we must have $v=\emptyset$ and thus 
\begin{align*}
 c_{\tau}(f) & =  \sum_{\emptyset \neq u\subseteq \{1,\ldots,s\}}\sum_{\substack{\bstau_u\in \{1,\ldots,\alpha-1\}^{|u|}\\ |\bstau_u|_1=\tau}}\prod_{j\in u}b_{\tau_j} \int_{[0,1)^s}f^{(\bstau_u,\bszero)}(\bsx)\rd \bsx \\
& = \sum_{\substack{\bstau \in \{0,1,\ldots,\alpha-1\}^{s}\\ |\bstau|_1=\tau}}\prod_{j=1}^{s}b_{\tau_j} \int_{[0,1)^{s}}f^{(\bstau)}(\bsx)\rd \bsx.
\end{align*}
The other summands have the exponents $|\bstau_{u\setminus v}|_1+\alpha|v| \geq \alpha$ and belong to $R_{s,\alpha,N}$. 

Next we prove the bound on $R_{s,\alpha,N}$.
From the above argument, it is obvious that $R_{s,\alpha,N}$ is bounded by
\begin{align*}
 |R_{s,\alpha,N}| & \leq \frac{1}{N^{\alpha}}\Bigg|\sum_{u\subseteq \{1,\ldots,s\}}\sum_{v\subseteq u}\sum_{\bstau_{u\setminus v}\in \{1,\ldots,\alpha\}^{|u\setminus v|}}\prod_{j\in u\setminus v}b_{\tau_j}  \\
& \qquad \times (-1)^{(\alpha+1)|v|}\int_{[0,1)^{s}}f^{(\bstau_{u\setminus v},\bsalpha_{v},\bszero)}(\bsx)\prod_{j\in v}\tilde{b}_{\alpha}(Nx_j)\rd \bsx \Bigg| .
\end{align*}
By applying H\"older's inequality, we have
\begin{align*}
& \Bigg|\int_{[0,1)^{s}}f^{(\bstau_{u\setminus v},\bsalpha_{v},\bszero)}(\bsx)\prod_{j\in v}\tilde{b}_{\alpha}(Nx_j)\rd \bsx \Bigg| \\
& \quad \leq \int_{[0,1)^{|v|}}\Bigg|\int_{[0,1)^{s-|v|}} f^{(\bstau_{u\setminus v},\bsalpha_{v},\bszero)}(\bsx)\rd \bsx_{-v}\Bigg| \cdot \Bigg|\prod_{j\in v}\tilde{b}_{\alpha}(Nx_j)\Bigg|\rd \bsx_v \\
& \quad \leq D_{\alpha}^{|v|}\Bigg(\int_{[0,1)^{|v|}}\Bigg|\int_{[0,1)^{s-|v|}} f^{(\bstau_{u\setminus v},\bsalpha_{v},\bszero)}(\bsx)\rd \bsx_{-v}\Bigg| ^q\rd \bsx_v\Bigg)^{1/q},
\end{align*}
for $1\leq q\leq \infty$. Using the above inequality and H\"older's inequality twice, we obtain
\begin{align*}
& |R_{s,\alpha,N}| \\
& \leq \frac{1}{N^{\alpha}}\sum_{u\subseteq \{1,\ldots,s\}}\sum_{v\subseteq u}\sum_{\bstau_{u\setminus v}\in \{1,\ldots,\alpha\}^{|u\setminus v|}}D_{\alpha}^{|u|}  \\
& \qquad \times \Bigg(\int_{[0,1)^{|v|}}\Bigg|\int_{[0,1)^{s-|v|}} f^{(\bstau_{u\setminus v},\bsalpha_{v},\bszero)}(\bsx)\rd \bsx_{-v}\Bigg| ^q\rd \bsx_v\Bigg)^{1/q} \\
& \leq \frac{1}{N^{\alpha}}\sum_{u\subseteq \{1,\ldots,s\}}\Bigg(\sum_{v\subseteq u}\sum_{\bstau_{u\setminus v}\in \{1,\ldots,\alpha\}^{|u\setminus v|}}\gamma_u^{q'}D_{\alpha}^{q'|u|}\Bigg)^{1/q'}  \\
& \qquad \times \Bigg(\sum_{v\subseteq u}\sum_{\bstau_{u\setminus v}\in \{1,\ldots,\alpha\}^{|u\setminus v|}}\gamma_u^{-q}\int_{[0,1)^{|v|}}\Bigg|\int_{[0,1)^{s-|v|}} f^{(\bstau_{u\setminus v},\bsalpha_{v},\bszero)}(\bsx)\rd \bsx_{-v}\Bigg| ^q\rd \bsx_v\Bigg)^{1/q} \\
& \leq \frac{1}{N^{\alpha}}\Bigg(\sum_{u\subseteq \{1,\ldots,s\}}\gamma_u^{r'}(\alpha+1)^{|u|r'/q'}D_{\alpha}^{r'|u|}\Bigg)^{1/r'}  \\
& \qquad \times \Bigg(\sum_{u\subseteq \{1,\ldots,s\}} \Bigg(\gamma_u^{-q}\sum_{v\subseteq u}\sum_{\bstau_{u\setminus v}\in \{1,\ldots,\alpha\}^{|u\setminus v|}} \\
& \qquad \qquad \qquad \int_{[0,1)^{|v|}}\Bigg|\int_{[0,1)^{s-|v|}} f^{(\bstau_{u\setminus v},\bsalpha_{v},\bszero)}(\bsx)\rd \bsx_{-v}\Bigg| ^q\rd \bsx_v\Bigg)^{r/q}\Bigg)^{1/r} \\
& = \frac{\|f\|_{s,\alpha,\bsgamma,q,r}H_{s,\bsgamma,q,r}}{N^{\alpha}}.
\end{align*}
This completes the proof of this theorem.
\end{proof}

\subsection{Existence results}\label{subsec:existence}
Here we prove the existence of good extrapolated polynomial lattice rules which achieve the almost optimal order of convergence.
Since each point set $P_{b^{m-\tau+1}}$ can be constructed independently, it suffices to prove the existence of a good polynomial lattice rule of size $b^m$ which achieves the almost optimal order of the term $B_{\bsgamma,\infty}(P_{b^m})$ for any $m\in \NN$.
In order to emphasize the role of the modulus $p$ and generating vector $\bsq$, instead of $B_{\bsgamma,\infty}(P_{b^m})$ we write
\[ B_{\bsgamma}(p,\bsq) =  \sum_{\emptyset \neq u\subseteq \{1,\ldots,s\}}\gamma_uC_{\alpha}^{|u|}\sum_{\substack{\bsk_u\in P_u^{\perp}(p,\bsq)\\ \exists j\in u\colon b^m\nmid k_j}}b^{-\mu_{\alpha}(\bsk_u)}, \]
where $m=\deg(p)$.
First we recall the following auxiliary result. See \cite[Lemma~7]{G16} for the proof.
\begin{lemma}\label{lem:sum_mu_alpha}
For $\alpha\geq 2$ and $1/\alpha<\lambda\leq 1$, we have
\[ \sum_{k=1}^{\infty}b^{-\lambda\mu_{\alpha}(k)} = \sum_{w=1}^{\alpha-1}\prod_{i=1}^{w}\left( \frac{b-1}{b^{\lambda i}-1}\right)+ \left( \frac{b^{\lambda \alpha}-1}{b^{\lambda \alpha}-b}\right)\prod_{i=1}^{\alpha}\left( \frac{b-1}{b^{\lambda i}-1}\right) =: E_{\alpha,\lambda}. \]
\end{lemma}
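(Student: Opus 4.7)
The plan is to stratify $\sum_{k=1}^{\infty} b^{-\lambda \mu_{\alpha}(k)}$ by $v$, the number of non-zero digits in the base-$b$ expansion of $k$. Writing $k = \kappa_1 b^{a_1-1} + \cdots + \kappa_v b^{a_v-1}$ with $a_1 > \cdots > a_v \geq 1$ and $\kappa_i \in \{1,\ldots,b-1\}$, the sum over the choices of the $\kappa_i$ contributes a factor $(b-1)^v$ in each stratum. The natural split is between $1\le v\le\alpha-1$ and $v\ge\alpha$, since the metric $\mu_\alpha$ truncates the sum of $a_i$'s at $\min(v,\alpha)$.

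In the short strata $1\le v\le\alpha-1$, we have $\mu_\alpha(k) = a_1+\cdots+a_v$. I would apply the bijection $(a_1,\ldots,a_v)\mapsto(d_1,\ldots,d_v)$ with $d_j=a_j-a_{j+1}$ for $j<v$ and $d_v=a_v$ (all $d_j\ge 1$), which gives $a_1+\cdots+a_v = \sum_{j=1}^v j\,d_j$. The multiple sum factorizes as a product of independent geometric series, each $\sum_{d_j\ge 1}b^{-\lambda j d_j} = (b^{\lambda j}-1)^{-1}$, producing $(b-1)^v\prod_{j=1}^v (b^{\lambda j}-1)^{-1}$. Summing over $v=1,\ldots,\alpha-1$ yields the first term of $E_{\alpha,\lambda}$.

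For $v\ge\alpha$, only $a_1,\ldots,a_\alpha$ enter $\mu_\alpha(k)$. With $a_1>\cdots>a_\alpha$ fixed, the positions $a_{\alpha+1}>\cdots>a_v$ range freely over the $v-\alpha$-subsets of $\{1,\ldots,a_\alpha-1\}$, contributing $\binom{a_\alpha-1}{v-\alpha}$. Summing $(b-1)^v\binom{a_\alpha-1}{v-\alpha}$ over $v\ge\alpha$ and applying the binomial theorem collapses this to $(b-1)^\alpha b^{a_\alpha-1}$. I would then apply the same $d_j$-substitution for $j=1,\ldots,\alpha$; the integrand picks up an extra factor $b^{d_\alpha-1}$, so the $d_\alpha$-series becomes $\sum_{d_\alpha\ge 1}b^{-(\lambda\alpha-1)d_\alpha}= (b^{\lambda\alpha-1}-1)^{-1}$, which converges precisely under the hypothesis $\lambda\alpha>1$. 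Finally, using $(b^{\lambda\alpha-1}-1)^{-1} = b/(b^{\lambda\alpha}-b)$ and rewriting
\[
(b-1)^\alpha\prod_{j=1}^{\alpha-1}\frac{1}{b^{\lambda j}-1}\cdot\frac{1}{b^{\lambda\alpha}-b} = \frac{b^{\lambda\alpha}-1}{b^{\lambda\alpha}-b}\prod_{i=1}^{\alpha}\frac{b-1}{b^{\lambda i}-1}
\]
by multiplying and dividing by $b^{\lambda\alpha}-1$ recovers the second term.

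The main subtlety is the long-stratum calculation: recognizing that the truncation in $\mu_\alpha$ makes the "hidden" digit positions $a_{\alpha+1},\ldots,a_v$ contribute combinatorially through $\binom{a_\alpha-1}{v-\alpha}$, that the binomial collapse leaves a clean weight $b^{a_\alpha-1}$ in the $a_\alpha$-sum, and that the exponent of the resulting geometric series in $d_\alpha$ is precisely $\lambda\alpha-1$, explaining why $1/\alpha<\lambda$ is the sharp convergence threshold.
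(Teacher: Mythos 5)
Your argument is correct. Note that the paper does not actually prove this lemma---it simply cites [G16, Lemma~7]---so there is no in-paper proof to compare against; your self-contained argument is welcome. The strategy you use (stratify by the number $v$ of nonzero base-$b$ digits of $k$, pass to difference variables $d_j=a_j-a_{j+1}$, $d_v=a_v$ to diagonalize $a_1+\cdots+a_{\min(v,\alpha)}=\sum_j j\,d_j$, and collapse the ``invisible'' digit positions for $v\geq\alpha$ via the binomial theorem to a weight $(b-1)^{\alpha}b^{a_{\alpha}-1}$) is the natural route and almost certainly what the cited reference does too.

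One small bookkeeping point worth tidying: after the substitution in the long stratum, the $d_{\alpha}$-factor in the product is $b^{-\lambda\alpha d_{\alpha}}\cdot b^{d_{\alpha}-1}$, so the series is
\[
\sum_{d_{\alpha}\geq 1} b^{-\lambda\alpha d_{\alpha}+d_{\alpha}-1} = \frac{1}{b}\sum_{d_{\alpha}\geq 1} b^{-(\lambda\alpha-1)d_{\alpha}} = \frac{1}{b}\cdot\frac{1}{b^{\lambda\alpha-1}-1} = \frac{1}{b^{\lambda\alpha}-b},
\]
whereas you wrote the $d_{\alpha}$-series as $\sum_{d_{\alpha}\geq 1}b^{-(\lambda\alpha-1)d_{\alpha}}$, omitting the $b^{-1}$ prefactor. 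Since your final expression for the long stratum is the correct $(b-1)^{\alpha}\prod_{j=1}^{\alpha-1}(b^{\lambda j}-1)^{-1}\cdot(b^{\lambda\alpha}-b)^{-1}$, you clearly accounted for the $b^{-1}$ in the end, but the intermediate line should show it explicitly. Everything else---the factorization into $\prod_j(b^{\lambda j}-1)^{-1}$ over the short strata, the binomial collapse, and the final rewriting $\frac{1}{b^{\lambda\alpha}-b}=\frac{b^{\lambda\alpha}-1}{b^{\lambda\alpha}-b}\cdot\frac{1}{b^{\lambda\alpha}-1}$ to produce the displayed form of $E_{\alpha,\lambda}$---is correct, and the condition $\lambda\alpha>1$ is indeed exactly what the $d_{\alpha}$-series demands.
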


Now we prove the existence result.
\begin{theorem}\label{thm:existence}
Let $p\in \FF_b[x]$ with $\deg(p)=m$ be irreducible.
For a set of weights $\bsgamma=(\gamma_u)_{u\subset \NN}$, there exists at least one $\bsq^*=(q_1^*,\ldots,q_s^*)\in (G^*_{b,m})^s$ such that
\begin{align*}
B_{\bsgamma}(p,\bsq^*) \leq \frac{1}{(b^m-1)^{1/\lambda}}\left[ \sum_{\emptyset \neq u\subseteq \{1,\ldots,s\}}\gamma_u^{\lambda}C_{\alpha}^{\lambda|u|}E_{\alpha,\lambda}^{|u|}\right]^{1/\lambda}
\end{align*}
holds for any $1/\alpha<\lambda\leq 1$.
\end{theorem}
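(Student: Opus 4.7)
The plan is a standard averaging (pigeonhole) argument over the set of generating vectors $\bsq \in (G^*_{b,m})^s$, combined with Jensen's inequality to allow the exponent $\lambda \in (1/\alpha, 1]$. First, because $0 < \lambda \leq 1$, I would apply the elementary inequality $(\sum a_i)^\lambda \leq \sum a_i^\lambda$ twice: once to move the $\lambda$-power inside the outer sum over subsets $u$, and once to move it inside the inner sum over $\bsk_u$. This yields
\[ B_{\bsgamma}(p,\bsq)^\lambda \leq \sum_{\emptyset \neq u\subseteq \{1,\ldots,s\}} \gamma_u^{\lambda} C_{\alpha}^{\lambda |u|} \sum_{\substack{\bsk_u\in P_u^{\perp}(p,\bsq)\\ \exists j\in u\colon b^m\nmid k_j}} b^{-\lambda \mu_{\alpha}(\bsk_u)}. \]

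Next, I would average this upper bound over all $\bsq \in (G^*_{b,m})^s$ and swap the order of summation, reducing the problem to the following counting statement: for each fixed $\bsk_u$ with at least one coordinate satisfying $b^m \nmid k_j$, how many $\bsq$ place $\bsk_u$ into $P_u^{\perp}(p,\bsq)$? Equivalently, how many $\bsq_u$ solve $\tr_m(\bsk_u)\cdot \bsq_u \equiv 0 \pmod p$? Since the condition $b^m \nmid k_j$ forces $\tr_m(k_j) \not\equiv 0 \pmod p$ for some $j \in u$, and since $p$ is irreducible so that $\FF_b[x]/(p)$ is a field, this coefficient is invertible modulo $p$. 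Thus $q_j$ is uniquely determined by the remaining coordinates, giving at most $(b^m-1)^{|u|-1}$ admissible choices on $u$ and a free choice on $\{1,\ldots,s\}\setminus u$, hence at most $(b^m-1)^{s-1}$ admissible $\bsq$ in total. The averaged indicator is therefore bounded by $1/(b^m-1)$.

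After relaxing the condition $\exists j\colon b^m \nmid k_j$ to $\bsk_u \in \NN^{|u|}$ (which only enlarges the bound), the inner sum factorizes over $j \in u$, and Lemma~\ref{lem:sum_mu_alpha} evaluates each single-variable factor as $E_{\alpha,\lambda}$. This gives
\[ \frac{1}{(b^m-1)^s}\sum_{\bsq \in (G^*_{b,m})^s} B_{\bsgamma}(p,\bsq)^\lambda \leq \frac{1}{b^m-1}\sum_{\emptyset \neq u\subseteq\{1,\ldots,s\}} \gamma_u^{\lambda} C_{\alpha}^{\lambda |u|} E_{\alpha,\lambda}^{|u|}. \]
Since the minimum over $\bsq$ of a nonnegative quantity cannot exceed its mean, there exists $\bsq^* \in (G^*_{b,m})^s$ whose $B_{\bsgamma}(p,\bsq^*)^\lambda$ is bounded by the right-hand side. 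Taking $1/\lambda$-th roots yields exactly the claimed inequality.

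The main obstacle is the counting step, where irreducibility of $p$ is essential: without it, $\tr_m(k_j)$ might be a zero-divisor modulo $p$ and the equation $\tr_m(\bsk_u)\cdot \bsq_u \equiv 0$ could admit either no solutions or an exceptionally large number of solutions in $q_j$, in which case the $1/(b^m-1)$ probability bound would fail. The two applications of Jensen's inequality and the final relaxation of the summation range are routine but should be presented carefully so that the resulting bound matches the statement exactly.
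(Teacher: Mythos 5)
Your argument is correct and follows essentially the same route as the paper's proof: Jensen's inequality (the elementary subadditivity of $t\mapsto t^\lambda$ for $\lambda\le1$) to move the exponent inside the sums, averaging over generating vectors, the counting step that uses irreducibility of $p$ to bound the number of $\bsq_u$ with $\tr_m(\bsk_u)\cdot\bsq_u\equiv 0\pmod p$ by $(b^m-1)^{|u|-1}$, relaxation of the summation range, and Lemma~\ref{lem:sum_mu_alpha}. The only cosmetic difference is that you state the count as ``at most $(b^m-1)^{|u|-1}$'' where the paper asserts equality; your phrasing is actually the more careful one, since the uniquely determined component could in principle be the zero polynomial and hence fall outside $G^*_{b,m}$, but this makes no difference to the upper bound.
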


\begin{proof}
Let $\bsq^*$ be given by
\[ \bsq^* = \arg\min_{\bsq \in (G^*_{b,m})^s}B_{\bsgamma}(p,\bsq). \]
Using Jensen's inequality, for any $1/\alpha < \lambda\leq 1$ we have
\begin{align*}
(B_{\bsgamma}(p,\bsq^*))^{\lambda} & \leq \frac{1}{(b^m-1)^s}\sum_{\bsq \in (G^*_{b,m})^s}(B_{\bsgamma}(p,\bsq))^{\lambda} \\
& \leq \frac{1}{(b^m-1)^s}\sum_{\bsq \in (G^*_{b,m})^s}\sum_{\emptyset \neq u\subseteq \{1,\ldots,s\}}\gamma_u^{\lambda}C_{\alpha}^{\lambda |u|}\sum_{\substack{\bsk_u\in P_u^{\perp}(p,\bsq)\\ \exists j\in u\colon b^m\nmid k_j}}b^{-\lambda \mu_{\alpha}(\bsk_u)} \\
& = \sum_{\emptyset \neq u\subseteq \{1,\ldots,s\}}\gamma_u^{\lambda}C_{\alpha}^{\lambda |u|}\sum_{\substack{\bsk_u\in \NN^{|u|}\\ \exists j\in u\colon b^m\nmid k_j}}b^{-\lambda \mu_{\alpha}(\bsk_u)}\\
& \qquad \times \frac{1}{(b^m-1)^{|u|}}\sum_{\substack{\bsq_u \in (G^*_{b,m})^{|u|}\\ \tr_m(\bsk_u)\cdot \bsq_u=0 \pmod p}}1.
\end{align*}
If there exists at least one component $k_j$ with $j\in u$ such that $b^m\nmid k_j$, the number of polynomials $\bsq_u\in (G^*_{b,m})^{|u|}$ which satisfies $\tr_m(\bsk_u)\cdot \bsq_u=0\pmod p$ is $(b^m-1)^{|u|-1}$. Thus we obtain
\begin{align*}
(B_{\bsgamma}(p,\bsq^*))^{\lambda} & \leq \frac{1}{b^m-1}\sum_{\emptyset \neq u\subseteq \{1,\ldots,s\}}\gamma_u^{\lambda}C_{\alpha}^{\lambda |u|}\sum_{\substack{\bsk_u\in \NN^{|u|}\\ \exists j\in u\colon b^m\nmid k_j}}b^{-\lambda \mu_{\alpha}(\bsk_u)} \\
& \leq \frac{1}{b^m-1}\sum_{\emptyset \neq u\subseteq \{1,\ldots,s\}}\gamma_u^{\lambda}C_{\alpha}^{\lambda |u|}E_{\alpha,\lambda}^{|u|}.
\end{align*}
This completes the proof.
\end{proof}

\subsection{Dependence of the upper bound on the dimension}\label{subsec:dependence}
Here we study the dependence of the worst-case error bound on the dimension.
For $1/\alpha < \lambda<1$, we write
\[ J_{s,\lambda,\bsgamma}=\left[ \sum_{\emptyset \neq u\subseteq \{1,\ldots,s\}}\gamma_u^{\lambda}C_{\alpha}^{\lambda|u|}E_{\alpha,\lambda}^{|u|}\right]^{1/\lambda}. \]
From Theorem~\ref{thm:bound_wrst_error} together with Theorem~\ref{thm:existence}, we have
\begin{align*}
& \sup_{\substack{f\in W_{s,\alpha,\bsgamma,q,\infty}\\ \|f\|_{s,\alpha,\bsgamma,q,\infty}\leq 1}}\left| \sum_{\tau=1}^{\alpha}a_{\tau}^{(\alpha)} I(f;P_{b^{m-\tau+1}}) -I(f)\right| \\
& \qquad \leq \sum_{\tau=1}^{\alpha}|a_{\tau}^{(\alpha)}| \left( \frac{J_{s,\lambda,\bsgamma}}{(b^{m-\tau+1}-1)^{1/\lambda}}+\frac{H_{s,\bsgamma,q,\infty}}{b^{\alpha(m-\tau+1)}}\right) \\
& \qquad \leq \sum_{\tau=1}^{\alpha}|a_{\tau}^{(\alpha)}| \frac{b^{\tau/\lambda}J_{s,\lambda,\bsgamma}+b^{\alpha(\tau-1)}H_{s,\bsgamma,q,\infty}}{(b^m-1)^{1/\lambda}} \leq \alpha |a_{1}^{(\alpha)}|\frac{b^{\alpha/\lambda}J_{s,\lambda,\bsgamma}+b^{\alpha(\alpha-1)}H_{s,\bsgamma,q,\infty}}{(b^m-1)^{1/\lambda}},
\end{align*}
for any $1/\alpha < \lambda<1$. Here we recall
\[ H_{s,\bsgamma,q,\infty} = \sum_{u\subseteq \{1,\ldots,s\}}\gamma_u(\alpha+1)^{|u|/q'}D_{\alpha}^{|u|}. \]
The dependence of the upper bound on the dimension can be stated as follows.

\begin{corollary}\label{cor:dependence}
Let $\alpha > 1$ be an integer and $N = b^m + b^{m-1} + \cdots + b^{m-\alpha+1}$ be the number of function evaluations used in the extrapolated polynomial lattice rule.
\begin{enumerate}
\item For general weights, assume that
\[ \lim_{s\to \infty}J_{s,\lambda,\bsgamma}<\infty\quad \text{and}\quad \lim_{s\to \infty}H_{s,\bsgamma,q,\infty} <\infty, \]
for some $1/\alpha<\lambda\leq 1$. Then the worst-case error for extrapolated polynomial lattice rules converges with order $\mathcal{O}(N^{-1/\lambda})$ with the constant bounded independently of the dimension.
\item For general weights, assume that there exists a positive real $q$ such that
\[ \limsup_{s\to \infty}\frac{J_{s,\lambda,\bsgamma}}{s^q}<\infty\quad \text{and}\quad \limsup_{s\to \infty}\frac{H_{s,\bsgamma,q,\infty}}{s^q} <\infty, \]
holds for some $1/\alpha<\lambda\leq 1$. Then the worst-case error bound for extrapolated polynomial lattice rules converges with order $\mathcal{O}(N^{-1/\lambda})$ with the constant depending polynomially on the dimension.
\item For product weights $\gamma_u=\prod_{j\in u}\gamma_j$, assume that
\[ \sum_{j=1}^{\infty}\gamma_j^{\lambda}<\infty, \]
for some $1/\alpha<\lambda\leq 1$. Then the worst-case error for extrapolated polynomial lattice rules converges with order $\mathcal{O}(N^{-1/\lambda})$ with the constant bounded independently of the dimension.
\item For product weights $\gamma_u=\prod_{j\in u}\gamma_j$, assume that
\[ \limsup_{s\to \infty}\frac{\sum_{j=1}^{s}\gamma_j^{\lambda}}{\log (s+1)}<\infty, \]
for some $1/\alpha<\lambda\leq 1$. Then the worst-case error bound for extrapolated polynomial lattice rules converges with order $\mathcal{O}(N^{-1/\lambda})$ with the constant  depending polynomially on the dimension.
\end{enumerate}
\end{corollary}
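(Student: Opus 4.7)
The plan is to start from the worst-case error bound
\[
\alpha|a_1^{(\alpha)}|\frac{b^{\alpha/\lambda}J_{s,\lambda,\bsgamma}+b^{\alpha(\alpha-1)}H_{s,\bsgamma,q,\infty}}{(b^m-1)^{1/\lambda}}
\]
displayed immediately before the corollary, and to control the two dimension-dependent quantities $J_{s,\lambda,\bsgamma}$ and $H_{s,\bsgamma,q,\infty}$ case by case. First I would note that $N = b^{m-\alpha+1}(b^{\alpha}-1)/(b-1)$, hence $b^m = \Theta(N)$ and the prefactor $(b^m-1)^{-1/\lambda}$ already behaves like $N^{-1/\lambda}$; it therefore suffices to bound the numerator either uniformly in $s$ (items 1 and 3) or polynomially in $s$ (items 2 and 4).

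For items 1 and 2 (general weights) the conclusions are immediate, since the hypotheses provide precisely the required uniform or polynomial-in-$s$ bounds on $J_{s,\lambda,\bsgamma}$ and $H_{s,\bsgamma,q,\infty}$; one simply substitutes them into the displayed inequality.

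For items 3 and 4 (product weights) I would exploit multiplicativity to rewrite
\[
J_{s,\lambda,\bsgamma}^{\lambda} = \prod_{j=1}^{s}\bigl(1+\gamma_j^{\lambda}C_{\alpha}^{\lambda}E_{\alpha,\lambda}\bigr)-1, \qquad H_{s,\bsgamma,q,\infty} = \prod_{j=1}^{s}\bigl(1+\gamma_j(\alpha+1)^{1/q'}D_{\alpha}\bigr),
\]
and apply $\log(1+x)\leq x$ to bound both products by exponentials of sums over $j$. Under the hypothesis $\sum_{j}\gamma_j^{\lambda}<\infty$ in item 3 this immediately yields a uniform bound on $J_{s,\lambda,\bsgamma}$; in item 4 the hypothesis $\sum_{j=1}^{s}\gamma_j^{\lambda}=\mathcal{O}(\log(s+1))$ similarly gives $J_{s,\lambda,\bsgamma}=\mathcal{O}((s+1)^{c})$ for some $c>0$.

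The main obstacle, and really the only subtle step, is the control of $H_{s,\bsgamma,q,\infty}$, which involves $\sum_{j}\gamma_j$ rather than $\sum_{j}\gamma_j^{\lambda}$. I would resolve this by observing that either product-weight hypothesis forces $\gamma_j\to 0$, so $\gamma_j\leq 1$ for all sufficiently large $j$; since $\lambda\leq 1$, this entails $\gamma_j\leq \gamma_j^{\lambda}$ eventually, so that $\sum_{j=1}^{s}\gamma_j$ inherits the boundedness (item 3) or the logarithmic growth (item 4) of $\sum_{j=1}^{s}\gamma_j^{\lambda}$. Plugging the resulting estimates on $J_{s,\lambda,\bsgamma}$ and $H_{s,\bsgamma,q,\infty}$ back into the displayed bound yields the claimed convergence rate $\mathcal{O}(N^{-1/\lambda})$ with the stated dependence on the dimension in each of the four cases.
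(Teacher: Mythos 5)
Your overall strategy --- starting from the displayed bound
\[
\alpha|a_1^{(\alpha)}|\,\frac{b^{\alpha/\lambda}J_{s,\lambda,\bsgamma}+b^{\alpha(\alpha-1)}H_{s,\bsgamma,q,\infty}}{(b^m-1)^{1/\lambda}},
\]
noting $b^m=\Theta(N)$, and then controlling $J_{s,\lambda,\bsgamma}$ and $H_{s,\bsgamma,q,\infty}$ case by case, with the product formula and $\log(1+x)\le x$ in the product-weight cases --- is sound, and in fact spells out more than the paper's own one-line proof (which calls items~1--2 immediate and refers items~3--4 to the argument of \cite[Theorem~5.3]{DP07}, a reference that concerns only the $J$-type quantity and has no analogue of the Euler--Maclaurin term $H$). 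Items~1, 2, and~3 of your argument are correct.

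There is, however, a genuine gap in item~4. You assert that the item-4 hypothesis forces $\gamma_j\to 0$, hence $\gamma_j\le\gamma_j^\lambda$ for all large $j$, hence $\sum_{j\le s}\gamma_j=O(\log(s+1))$. None of these steps is valid under $\limsup_s\bigl(\sum_{j\le s}\gamma_j^\lambda\bigr)/\log(s+1)<\infty$. A counterexample: for $\lambda<1$ take $\gamma_j=2^{k/\lambda}$ when $j=2^{2^k}$ and $\gamma_j=0$ otherwise. Then $\sum_{j\le s}\gamma_j^\lambda=\sum_{k\le\log_2\log_2 s}2^k\asymp\log_2 s$, so the hypothesis holds, yet $\gamma_j$ is unbounded, and $\sum_{j\le s}\gamma_j\asymp(\log_2 s)^{1/\lambda}$, which is strictly larger than $O(\log s)$. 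Consequently the chain $\log H_{s,\bsgamma,q,\infty}\le c\sum_j\gamma_j=O(\log s)$ breaks. The conclusion of item~4 is nevertheless true, but you need a different estimate for $H$: bound $\log H=\sum_{j=1}^s\log\bigl(1+c\gamma_j\bigr)$ directly, splitting on whether $\gamma_j\le 1$ (where $\log(1+c\gamma_j)\le c\gamma_j\le c\gamma_j^\lambda$) or $\gamma_j>1$ (where the number of such indices $j\le s$ is at most $\sum_j\gamma_j^\lambda=O(\log s)$, one has $\log(1+c\gamma_j)\le\log(1+c)+\lambda^{-1}\log\gamma_j^\lambda$, and a concavity argument on $\sum_j\log\gamma_j^\lambda$ under the constraint $\sum_j\gamma_j^\lambda=O(\log s)$ gives $O(\log s)$). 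Alternatively, adding the common normalisation $\gamma_j\le 1$ makes your original argument go through verbatim; but as stated, the deduction is incorrect.
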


\begin{proof}
The results for general weights follows immediately.
The proof of the results for product weights can be also completed by following essentially the same argument as in \cite[Proof of Theorem~5.3]{DP07}.
\end{proof}

\begin{remark}
For product weights, good extrapolated polynomial lattice rules can be constructed as discussed in the next section. As can be seen from the error bound obtained in Theorem~\ref{thm:cbc}, if the same condition as Item~3 or 4 of Corollary~\ref{cor:dependence} holds, we also have exactly the same result for the dependence of the worst-case error bound on the dimension.
\end{remark}
\section{Component-by-component construction}\label{sec:cbc}

\subsection{Convergence analysis}
Here we only consider the case of product weights and prove that the CBC construction algorithm can find a good polynomial lattice rule which achieves the almost optimal order bound on the criterion $B_{\bsgamma}(p,\bsq)$. Remark~\ref{rem_prod} below points out the challenge in generalizing the result to general weights.

The CBC construction algorithm proceeds as follows:
\begin{algorithm}
\label{alg:cbc}
For $m,s\in \NN$, $\alpha\geq 2$ and $\bsgamma=(\gamma_j)_{j\in \NN}$.
\begin{enumerate}
\item Choose an irreducible polynomial $p\in \FF_b[x]$ with $\deg(p)=m$.
\item Set $q_1^*=1$.
\item For $2\leq d \leq s$, find $q_d^*\in G^*_{b,m}$ which minimizes $$B_{\bsgamma}(p,(q^*_1,\ldots,q^*_{d-1},q_d))=\sum_{\emptyset \neq u\subseteq \{1,\ldots,d \}}\gamma_uC_{\alpha}^{|u|}\sum_{\substack{\bsk_u\in P_u^{\perp}(p,(q^*_1,\ldots,q^*_{d-1},q_d))\\ \exists j\in u\colon b^m\nmid k_j}}b^{-\mu_{\alpha}(\bsk_u)}$$ as a function of $q_d$.
\end{enumerate}
\end{algorithm}

\noindent In Section~\ref{sec_fast} we simplify the formula for $B_{\bsgamma}(p,(q^*_1,\ldots,q^*_{d-1},q_d))$ to obtain a criterion which can be computed efficiently.

\begin{theorem}\label{thm:cbc}
Let $p\in \FF_b[x]$ with $\deg(p)=m$ and $\bsq_s^*=(q_1^*,\ldots,q_s^*)\in (G^*_{b,m})^s$ be found by Algorithm~\ref{alg:cbc}. Then for $1\leq d \leq s$ we have
\begin{align*}
B_{\bsgamma}(p,\bsq_d^*)\leq \frac{1}{(b^m-1)^{1/\lambda}}\prod_{j=1}^{d}\left[ 1+ \gamma_j^{\lambda} C_{\alpha}^{\lambda}E_{\alpha,\lambda}\right]^{1/\lambda}
\end{align*}
holds for any $1/\alpha<\lambda\leq 1$.
\end{theorem}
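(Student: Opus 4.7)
I would prove this by induction on $d$, running the same averaging argument as in Theorem~\ref{thm:existence} but one coordinate at a time. The base case $d=1$ is immediate: with $q_1^*=1$ and $p$ irreducible of degree $m$, the congruence $\tr_m(k_1)\equiv 0\pmod p$ forces $\tr_m(k_1)=0$ by degree, hence $b^m\mid k_1$; the constraint $b^m\nmid k_1$ is then never satisfied and $B_\bsgamma(p,(1))=0$, trivially meeting the bound.

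For the inductive step, fix $d\geq 2$ and assume the result at step $d-1$. Since $q_d^*$ is chosen to minimise $B_\bsgamma(p,(\bsq_{d-1}^*,q_d))$, the minimum is at most the uniform average over $q_d\in G^*_{b,m}$; raising both sides to the power $\lambda\in(1/\alpha,1]$ and using Jensen's inequality to commute the $\lambda$-power with both the outer sum over $u$ and the inner sum over $\bsk_u$ yields
\[
B_\bsgamma(p,\bsq_d^*)^\lambda \;\leq\; \frac{1}{b^m-1}\sum_{q_d\in G^*_{b,m}}\sum_{\emptyset\neq u\subseteq\{1,\ldots,d\}}\gamma_u^\lambda C_\alpha^{\lambda|u|}\sum_{\substack{\bsk_u\in P_u^\perp(p,(\bsq_{d-1}^*,q_d))\\ \exists j\in u:\,b^m\nmid k_j}}b^{-\lambda\mu_\alpha(\bsk_u)}.
\]
The key computation is to interchange the sums over $q_d$ and $\bsk_u$ and count, for each $\bsk_u$, how many $q_d$ place $\bsk_u$ in the dual lattice. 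Three cases arise: (i) if $d\notin u$ the constraint is $q_d$-free, giving count $b^m-1$ precisely when $\bsk_u\in P_u^\perp(p,\bsq_{d-1}^*)$; (ii) if $d\in u$ and $b^m\nmid k_d$ then $\tr_m(k_d)$ is a unit modulo the irreducible $p$ and the congruence determines $q_d$ uniquely, so the count is at most $1$; (iii) if $d\in u$ and $b^m\mid k_d$ then $\tr_m(k_d)=0$, the condition collapses to $\bsk_{u\setminus\{d\}}\in P_{u\setminus\{d\}}^\perp(p,\bsq_{d-1}^*)$, and the count is at most $b^m-1$.

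Using the product-weight identity $\gamma_u=\prod_{j\in u}\gamma_j$ together with the additivity $\mu_\alpha(\bsk_u)=\sum_{j\in u}\mu_\alpha(k_j)$ factorises every one-dimensional sum, and Lemma~\ref{lem:sum_mu_alpha} bounds each such factor by $E_{\alpha,\lambda}$. Collecting the three cases, rearranging, and combining with the inductive hypothesis yields $B_\bsgamma(p,\bsq_d^*)^\lambda \leq \frac{1}{b^m-1}\prod_{j=1}^d(1+\gamma_j^\lambda C_\alpha^\lambda E_{\alpha,\lambda})$, after which taking $\lambda^{-1}$-th roots gives the advertised estimate. The main technical difficulty is case (iii): the residual sum appearing there is essentially the $(d-1)$-variate analogue of the criterion being bounded, so the multiplicity $b^m-1$ must be absorbed by the small one-dimensional factor $\sum_{k_d\geq 1,\,b^m\mid k_d}b^{-\lambda\mu_\alpha(k_d)}\leq b^{-\lambda m}E_{\alpha,\lambda}$. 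Careful bookkeeping is required so that this $b^{-\lambda m}$ saving cancels the $b^m$ blow-up exactly and no extra powers of $b^m$ creep into the final product; once this is arranged, the argument runs in parallel to the averaging in Theorem~\ref{thm:existence}.
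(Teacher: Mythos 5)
There is a genuine gap in the inductive step, and it sits in case~(i), not case~(iii) as you suggest. You apply the chain ``minimum $\le$ average, then Jensen to push the $\lambda$-th power inside'' to the \emph{entire} quantity $B_{\bsgamma}(p,(\bsq^*_{d-1},q_d))$. After interchanging the $q_d$-sum with the $(u,\bsk_u)$-sums, the case~(i) terms ($d\notin u$) are $q_d$-free, so the multiplicity $b^m-1$ cancels exactly against the $1/(b^m-1)$ normalisation and you are left with
\[
Q_{d-1,\lambda}:=\sum_{\emptyset\neq u\subseteq\{1,\ldots,d-1\}}\gamma_u^{\lambda}C_{\alpha}^{\lambda|u|}\!\!\sum_{\substack{\bsk_u\in P_u^{\perp}(p,\bsq^*_{d-1})\\ \exists j\in u:\, b^m\nmid k_j}}\!\! b^{-\lambda\mu_{\alpha}(\bsk_u)},
\]
with \emph{no} factor $1/(b^m-1)$ in front. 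Your inductive hypothesis controls $B_{\bsgamma}(p,\bsq^*_{d-1})^{\lambda}$, but by the very Jensen step you used, $B_{\bsgamma}(p,\bsq^*_{d-1})^{\lambda}\le Q_{d-1,\lambda}$, so the hypothesis gives a \emph{lower} bound on $Q_{d-1,\lambda}$, not an upper bound. In particular there is no way to re-insert the crucial $1/(b^m-1)$ into this contribution; the ``$b^{-\lambda m}$ saving'' you invoke lives only in the $k_d$-factor and so only touches case~(iii), while case~(i) carries no $k_d$-sum at all. Strengthening the induction to bound $Q_{d,\lambda}$ directly would not close either, because $q_d^*$ is chosen to minimise the un-powered $\psi$, not the $\lambda$-powered version appearing after Jensen, so the min-$\le$-average step is unavailable for the $\lambda$-powered criterion.

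The paper's proof sidesteps this exactly by \emph{not} applying Jensen or the averaging to $B_{\bsgamma}(p,\bsq_d)$ globally. It first establishes the pointwise identity
\[
B_{\bsgamma}(p,\bsq_d)=B_{\bsgamma}(p,\bsq^*_{d-1})\Bigl(1+\gamma_dC_{\alpha}\!\!\sum_{\substack{k\in\NN\\ b^m\mid k}}\!b^{-\mu_{\alpha}(k)}\Bigr)+\psi_{p,\bsq^*_{d-1}}(q_d),
\]
using product weights and the fact that $b^m\mid k_d$ forces $\tr_m(k_d)=0$. The first summand is $q_d$-independent and contains $B_{\bsgamma}(p,\bsq^*_{d-1})$ itself (not its $\lambda$-powered relaxation), so the inductive hypothesis applies to it directly after taking $\lambda$-th powers of the identity. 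The averaging/Jensen machinery is then applied only to $\psi$, where every surviving $\bsk$ has $b^m\nmid k_d$ and the counting over $q_d$ gives at most one solution, producing the needed $1/(b^m-1)$. If you want to salvage your route, you must first perform this decomposition; without it, the case~(i) contribution is simply not controlled by the stated induction.
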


\begin{proof}
Without loss of generality, we assume that the modulus $p$ is monic.
We prove the theorem by induction on $d$.
First let $d=1$.
Since we assume $q_1^*=1$, the dual polynomial lattice is given by
\[ P^{\perp}(p,1) = \{k\in \NN_0\colon \tr_m(k)=0\pmod p\} = \{k\in \NN_0\colon b^m\mid k\}. \]
Thus we have
\[ B_{\bsgamma}(p,1)= C_{\alpha}\gamma_1 \sum_{\substack{k\in P^{\perp}(p,1)\setminus \{0\}\\ b^m\nmid k}}b^{-\mu_{\alpha}(k)}=0 \leq  \frac{1}{(b^m-1)^{1/\lambda}}\left(1+\gamma^{\lambda}_1C^{\lambda}_{\alpha}E_{\alpha,\lambda}\right)^{1/\lambda}, \]
for any $1/\alpha<\lambda\leq 1$.

Next suppose that we have already found the first $d-1$ components of the generating vector $\bsq_{d-1}^*=(q^*_1,\ldots,q^*_{d-1})\in (G^*_{b,m})^{d-1}$ such that
\begin{align*}
B_{\bsgamma}(p,\bsq_{d-1}^*)\leq \frac{1}{(b^m-1)^{1/\lambda}}\prod_{j=1}^{d-1}\left[ 1+ \gamma_j^{\lambda} C_{\alpha}^{\lambda}E_{\alpha,\lambda}\right]^{1/\lambda}
\end{align*}
holds for any $1/\alpha<\lambda\leq 1$.
Putting $\bsq_d=(\bsq_{d-1}^*,q_d)$ with $q_d \in G^*_{b,m}$ we have
\begin{align}
B_{\bsgamma}(p,\bsq_d) & = \sum_{\emptyset \neq u\subseteq \{1,\ldots,d-1\}}\gamma_u C_{\alpha}^{|u|}\sum_{\substack{\bsk_u\in P_u^{\perp}(p,\bsq_d)\\ \exists j\in u\colon b^m\nmid k_j}}b^{-\mu_{\alpha}(\bsk_u)} \nonumber \\
& \qquad + \sum_{\emptyset \neq u\subseteq \{1,\ldots,d-1\}}\gamma_{u\cup\{d\}} C_{\alpha}^{|u|+1}\sum_{\substack{\bsk_{u\cup\{d\}}\in P_{u\cup\{d\}}^{\perp}(p,\bsq_d)\\ \exists j\in u\colon b^m\nmid k_j\\ b^m\mid k_d}}b^{-\mu_{\alpha}(\bsk_{u\cup\{d\}})} \nonumber \\
& \qquad + \sum_{u\subseteq \{1,\ldots,d-1\}}\gamma_{u\cup\{d\}}C_{\alpha}^{|u|+1}\sum_{\substack{\bsk_{u\cup\{d\}}\in P_{u\cup\{d\}}^{\perp}(p,\bsq_d)\\ b^m\nmid k_d}}b^{-\mu_{\alpha}(\bsk_{u\cup\{d\}})} \nonumber \\
& = B_{\bsgamma}(p,\bsq^*_{d-1}) + \sum_{\emptyset \neq u\subseteq \{1,\ldots,d-1\}}\gamma_{u\cup\{d\}}C_{\alpha}^{|u|+1}\sum_{\substack{\bsk_u\in P_u^{\perp}(p,\bsq_{d-1}^*)\\ \exists j\in u\colon b^m\nmid k_j}}\sum_{\substack{k_d\in \NN\\ b^m\mid k_d}}b^{-\mu_{\alpha}(\bsk_u,k_d)} \nonumber \\
& \qquad + \sum_{u\subseteq \{1,\ldots,d-1\}}\gamma_{u\cup\{d\}}C_{\alpha}^{|u|+1}\sum_{\substack{\bsk_{u\cup\{d\}}\in P_{u\cup\{d\}}^{\perp}(p,\bsq_d)\\ b^m\nmid k_d}}b^{-\mu_{\alpha}(\bsk_{u\cup\{d\}})} \nonumber \\
& = B_{\bsgamma}(p,\bsq^*_{d-1}) \left(1+ \gamma_d C_{\alpha}\sum_{\substack{k_d\in \NN\\ b^m\mid k_d}}b^{-\mu_{\alpha}(k_d)}\right)  \nonumber \\
& \qquad + \sum_{u\subseteq \{1,\ldots,d-1\}}\gamma_{u\cup\{d\}}C_{\alpha}^{|u|+1}\sum_{\substack{\bsk_{u\cup\{d\}}\in P_{u\cup\{d\}}^{\perp}(p,\bsq_d)\\ b^m\nmid k_d}}b^{-\mu_{\alpha}(\bsk_{u\cup\{d\}})} , \label{eq:decomp_error}
\end{align}
where the second equality stems from the fact that since $b^m\mid k_d$, we have $\tr_m(k_d)=0$ and thus $\tr_m(\bsk_{u\cup \{d\}})\cdot (\bsq_u^*, q_d)=\tr_m(\bsk_u)\cdot \bsq_u^*$, which yields 
\[ \{\bsk_{u\cup \{d\}}\in P_{u\cup \{d\}}^{\perp}(p,\bsq_d)\colon b^m\mid k_d\}= \{(\bsk_u,k_d)\in \NN^{|u|+1}\colon \bsk_u\in P_u^{\perp}(p,\bsq_{d-1}^*), b^m \mid k_d\}. \]
It is clear that the first term of \eqref{eq:decomp_error} does not depend on the choice of $q_d$.
Thus denoting the second term of \eqref{eq:decomp_error} by
\[ \psi_{p,\bsq_{d-1}^*}(q_d):=  \sum_{u\subseteq \{1,\ldots,d-1\}}\gamma_{u\cup\{d\}}C_{\alpha}^{|u|+1}\sum_{\substack{\bsk_{u\cup\{d\}}\in P_{u\cup\{d\}}^{\perp}(p,\bsq_d)\\ b^m\nmid k_d}}b^{-\mu_{\alpha}(\bsk_{u\cup\{d\}})}, \]
we have
\[ q_d^*=\arg\min_{q_d\in G^*_{b,m}} B_{\bsgamma}(p,\bsq_d) = \arg\min_{q_d\in G^*_{b,m}} \psi_{p,\bsq_{d-1}^*}(q_d) . \]

Using Jensen's inequality, as long as $1/\alpha<\lambda\leq 1$, we have
\begin{align*}
& (\psi_{p,\bsq_{d-1}^*}(q_d^*))^{\lambda} \\
& \leq \frac{1}{b^m-1}\sum_{q_d\in G^*_{b,m}}(\psi_{p,\bsq_{d-1}^*}(q_d))^{\lambda} \\
& \leq \frac{1}{b^m-1}\sum_{q_d\in G^*_{b,m}}\sum_{u\subseteq \{1,\ldots,d-1\}}\gamma^{\lambda}_{u\cup\{d\}}C_{\alpha}^{\lambda(|u|+1)}\sum_{\substack{\bsk_{u\cup\{d\}}\in P_{u\cup\{d\}}^{\perp}(p,\bsq_d)\\ b^m\nmid k_d}}b^{-\lambda\mu_{\alpha}(\bsk_{u\cup\{d\}})} \\
& = \frac{1}{b^m-1}\sum_{u\subseteq \{1,\ldots,d-1\}}\gamma^{\lambda}_{u\cup\{d\}}C_{\alpha}^{\lambda(|u|+1)}\sum_{\substack{\bsk_{u\cup\{d\}}\in \NN^{|u|+1}\\ b^m\nmid k_d}}b^{-\lambda\mu_{\alpha}(\bsk_{u\cup\{d\}})} \\
& \qquad \times \sum_{\substack{q_d\in G^*_{b,m}\\ \tr_m(\bsk_u)\cdot \bsq_u^{*}+\tr_m(k_d)q_d = 0\pmod p}}1.
\end{align*}
Since $b^m\nmid k_d$, we have $\tr_m(k_d)\neq 0$.
For $\bsk_u\in P_u^{\perp}(p,\bsq_{d-1}^*)$, it follows from the definition of the dual polynomial lattice that $\tr_m(\bsk_u)\cdot \bsq_u^{*}=0\pmod p$, and thus there is no polynomial $q_d\in G^*_{b,m}$ such that the condition $\tr_m(k_d)q_d = 0\pmod p$ is satisfied.
For $\bsk_u\notin P_u^{\perp}(p,\bsq_{d-1}^*)$, there exists exactly one $q_d\in G^*_{b,m}$ such that $\tr_m(k_d)q_d = -\tr_m(\bsk_u)\cdot \bsq_u^{*} \pmod p$. From these facts and Lemma~\ref{lem:sum_mu_alpha}, we obtain
\begin{align*}
(\psi_{p,\bsq_{d-1}^*}(q_d^*))^{\lambda} & \leq \frac{1}{b^m-1}\sum_{u\subseteq \{1,\ldots,d-1\}}\gamma^{\lambda}_{u\cup\{d\}}C_{\alpha}^{\lambda(|u|+1)}\sum_{\substack{\bsk_u\in \NN^{|u|}\\ \bsk_u\notin P_u^{\perp}(p,\bsq_{d-1}^*)}}\sum_{\substack{k_d\in \NN\\  b^m\nmid k_d}}b^{-\lambda\mu_{\alpha}(\bsk_u,k_d)} \\
& \leq \frac{1}{b^m-1}\sum_{u\subseteq \{1,\ldots,d-1\}}\gamma^{\lambda}_{u\cup\{d\}}C_{\alpha}^{\lambda(|u|+1)}\sum_{\bsk_u\in \NN^{|u|}}b^{-\lambda\mu_{\alpha}(\bsk_u)}\sum_{\substack{k_d\in \NN\\  b^m\nmid k_d}}b^{-\lambda\mu_{\alpha}(k_d)} \\
& =  \frac{1}{b^m-1}\prod_{j=1}^{d-1}\left[ 1+ \gamma_j^{\lambda} C_{\alpha}^{\lambda}E_{\alpha,\lambda}\right]\cdot \gamma_d^{\lambda} C_{\alpha}^{\lambda}\sum_{\substack{k_d\in \NN\\  b^m\nmid k_d}}b^{-\lambda\mu_{\alpha}(k_d)}.
\end{align*}
Finally by applying Jensen's inequality to \eqref{eq:decomp_error} and using Lemma~\ref{lem:sum_mu_alpha}, we have
\begin{align*}
(B_{\bsgamma}(p,\bsq^*_d))^{\lambda} & \leq (B_{\bsgamma}(p,\bsq^*_{d-1}))^{\lambda}\left(1+ \gamma_d^{\lambda}C^{\lambda}_{\alpha}\sum_{\substack{k_d\in \NN\\ b^m\mid k_d}}b^{-\lambda\mu_{\alpha}(k_d)}\right)  \\
& \qquad +  \frac{1}{b^m-1}\prod_{j=1}^{d-1}\left[ 1+ \gamma_j^{\lambda} C_{\alpha}^{\lambda}E_{\alpha,\lambda}\right]\cdot \gamma_d^{\lambda} C_{\alpha}^{\lambda}\sum_{\substack{k_d\in \NN\\  b^m\nmid k_d}}b^{-\lambda\mu_{\alpha}(k_d)} \\
& \leq \frac{1}{b^m-1}\prod_{j=1}^{d-1}\left[ 1+ \gamma_j^{\lambda} C_{\alpha}^{\lambda}E_{\alpha,\lambda}\right] \cdot \left[ 1+\gamma_d^{\lambda}C^{\lambda}_{\alpha}\sum_{k_d\in \NN}b^{-\lambda\mu_{\alpha}(k_d)}\right] \\
& = \frac{1}{b^m-1}\prod_{j=1}^{d}\left[ 1+ \gamma_j^{\lambda} C_{\alpha}^{\lambda}E_{\alpha,\lambda}\right] .
\end{align*}
This completes the proof.
\end{proof}

\begin{remark}\label{rem_prod}
In the above proof, we use the property of product weights to obtain the equality \eqref{eq:decomp_error}.
In fact, this is a crucial step to get the almost optimal order upper bound on $B_{\bsgamma}(p,\bsq)$.
Thus it is an open question whether a similar proof goes through for general weights.
\end{remark}
\subsection{Fast construction algorithm}\label{sec_fast}
In the convergence analysis above, we used the criterion $B_{\bsgamma}(p,\bsq_d)$.
However, since the quantity
\[ \sum_{\emptyset \neq u\subseteq \{1,\ldots,d\}}\gamma_uC_{\alpha}^{|u|}\sum_{\substack{\bsk_u\in P_u^{\perp}(p,\bsq_d)\\ \forall j\in u\colon b^m\mid k_j}}b^{-\mu_{\alpha}(\bsk_u)} = \sum_{\emptyset \neq u\subseteq \{1,\ldots,d\}}\gamma_uC_{\alpha}^{|u|}\sum_{\bsk_u\in \NN^{|u|}}b^{-\mu_{\alpha}(b^m\bsk_u)} \]
does not depend on the choice of generating vector $\bsq_d$, we can add this quantity to the criterion $B_{\bsgamma}(p,\bsq_d)$ to get another criterion
\begin{align*}
\tilde{B}_{\bsgamma}(p,\bsq_d) & = B_{\bsgamma}(p,\bsq_d) +  \sum_{\emptyset \neq u\subseteq \{1,\ldots,d\}}\gamma_u C_{\alpha}^{|u|}\sum_{\substack{\bsk_u\in P_u^{\perp}(p,\bsq_d)\\ \forall j\in u\colon b^m\mid k_j}}b^{-\mu_{\alpha}(\bsk_u)} \\
& = \sum_{\emptyset \neq u\subseteq \{1,\ldots,d\}}\gamma_u C_{\alpha}^{|u|}\sum_{\bsk_u\in P_u^{\perp}(p,\bsq_d)}b^{-\mu_{\alpha}(\bsk_u)} \\
& = -1+\frac{1}{b^m}\sum_{n=0}^{b^m-1}\sum_{u\subseteq \{1,\ldots,d\}}\gamma_u C_{\alpha}^{|u|}\sum_{\bsk_u\in \NN^{|u|}}b^{-\mu_{\alpha}(\bsk_u)}\wal_{(\bsk_u,\bszero)}(\bsx_n) \\
& = -1+\frac{1}{b^m}\sum_{n=0}^{b^m-1}\prod_{j=1}^{d}\left[ 1+ \gamma_j C_{\alpha} w_{\alpha}\left(v_m\left( \frac{nq_j}{p}\right) \right)\right],
\end{align*}
where we used Lemma~\ref{lem:character} in the third equality, and the function $w_{\alpha}\colon [0,1)\to \RR$ is defined by
\[ w_{\alpha}(x) = \sum_{k=1}^{\infty}b^{-\mu_{\alpha}(k)}\wal_k(x). \]
As shown in \cite[Theorem~2]{BDLNP12}, one can compute $w_{\alpha}$ efficiently when $x$ is a $b$-adic rational.
More precisely, if $x$ is of the form $a/b^m$ for $m\in \NN$ and $0\leq a< b^m$, $w_{\alpha}(x)$ can be computed in at most $O(\alpha m)$ operations.
Furthermore, in case of $b=2$, we have explicit formulas for $w_2$ and $w_3$, see \cite[Corollary~1]{BDLNP12}. 

In what follows, we show how one can use the fast CBC construction algorithm to find suitable polynomials $q_1^*,\ldots,q_s^*\in G^*_{b,m}$ by employing $\tilde{B}_{\bsgamma}(p,\bsq)$ as a quality measure.
Assume that $q^*_1=1,q^*_2,\ldots,q^*_{d-1}$ are already found.
Let
\[ P_{n,d-1}=\prod_{j=1}^{d-1}\left[ 1+ \gamma_j C_{\alpha}  w_{\alpha}\left(v_m\left( \frac{nq^*_j}{p}\right) \right)\right], \]
for $0\leq n<b^m$. Note that we have
\[ P_{0,d-1}=\prod_{j=1}^{d-1}\left[ 1+  \gamma_j C_{\alpha} w_{\alpha}\left(0 \right)\right] , \]
regardless of the choice $q^*_1,q^*_2,\ldots,q^*_{d-1}$.
Now the criterion $\tilde{B}_{\bsgamma}(p,\bsq_d)$ is given by
\begin{align*}
\tilde{B}_{\bsgamma}(p,\bsq_d) & = -1+\frac{1}{b^m}\sum_{n=0}^{b^m-1}P_{n,d-1}\left[ 1+ \gamma_d C_{\alpha} w_{\alpha}\left(v_m\left( \frac{nq_d}{p}\right) \right)\right] \\
& = -1+\frac{P_{0,d}}{b^m} + \frac{1}{b^m}\sum_{n=1}^{b^m-1}P_{n,d-1}\left[ 1+ \gamma_d C_{\alpha}  w_{\alpha}\left(v_m\left( \frac{nq_d}{p}\right) \right)\right] \\
& = -1+\frac{P_{0,d}}{b^m} + \frac{1}{b^m}\sum_{n=1}^{b^m-1}P_{n,d-1}+ \frac{\gamma_d C_{\alpha}}{b^m}\sum_{n=1}^{b^m-1}P_{n,d-1} w_{\alpha}\left(v_m\left( \frac{nq_d}{p}\right) \right).
\end{align*}
Thus it is obvious that the CBC algorithm finds a component $q_d^*$ which minimizes the last sum.

Since the modulus $p$ is assumed to be irreducible, there exists a primitive polynomial $g\in \FF_b[x]/p$ for which we have $\{g^0=g^{b^m-1}=1, g^{1},\ldots,g^{b^m-2}\}=(\FF_b[x]/p)\setminus \{0\}$, and then the last sum for a polynomial $q_d=g^{z}$ with $1\leq z\leq b^m-1$ is equivalent to
\begin{align*}
\sum_{n=1}^{b^m-1}P_{n,d-1} w_{\alpha}\left(v_m\left( \frac{nq_d}{p}\right) \right) = \sum_{n=1}^{b^m-1} P_{g^{-n},d-1} w_{\alpha}\left(v_m\left( \frac{g^{z-n}}{p}\right) \right) =: \eta_z,
\end{align*}
where we note that the subscript $g^{-n}$ appearing in $P_{g^{-n},d-1}$ is identified with the integer in $\{1,\ldots,b^m-1\}$.
We define the circulant matrix
\[ A = \omega_{\alpha}\left( v_m\left( \frac{g^{z-n}}{p}\right)\right)_{1\leq z,n\leq b^m-1},\]
and compute
\[ (\eta_1,\ldots,\eta_{b^m-1})^{\top} = A \cdot (P_{g^{-1},\tau-1},P_{g^{-2},\tau-1},\ldots,P_{g^{-b^m+1},d-1})^{\top}.\]
Let $z_0$ be an integer such that $\eta_{z_0}\leq \eta_{z}$ holds for any $1\leq z\leq b^m-1$. Then we set $q_d^*=g^{z_0}$.
Since the matrix $A$ is circulant, the matrix-vector multiplication above can be done by using the fast Fourier transform  in $O(mb^m)$ arithmetic operations with $O(b^m)$ memory space for $P_{n,d-1}$, see \cite{NC06a,NC06b}.
Therefore, we can compute the vector $(\eta_1,\ldots,\eta_{b^m-1})$ in a fast way.
After finding $q_d^*=g^{z_0}$, each $P_{n,d-1}$ is updated simply by
\[ P_{g^{-n},d} = P_{g^{-n},d-1} \left( 1+ \gamma_d C_{\alpha} w_{\alpha}\left(v_m\left( \frac{g^{z_0-n}}{p}\right) \right)\right). \]

Since each element of the circulant matrix $A$ can be calculated in at most $O(\alpha m)$ arithmetic operations, calculating one row (or one column) of $A$ requires $O(\alpha mb^m)$ arithmetic operations as the first step of the CBC algorithm.
Then the CBC algorithm proceeds in an inductive way as described above, yielding $O((s+\alpha)mb^m)$ arithmetic operations with $O(b^m)$ memory space for finding the generating vector $\bsq^*\in (G^*_{b,m})^s$.
Further, for an extrapolated polynomial lattice rule, we need to construct polynomial lattice rules with $\alpha$ consecutive sizes of nodes, $b^{m-\alpha+1},\ldots,b^{m}$, implying that the total number of points is $N=b^{m-\alpha+1}+\cdots+b^{m}$.
The obvious inequality
\[ \sum_{\tau=1}^{\alpha}(s+\alpha)(m-\tau+1)b^{m-\tau+1} \leq (s+\alpha)m N \leq (s+\alpha)N\log_b N\] 
shows that the total construction cost is of $O((s+\alpha)N \log N)$ together with $O(N)$ memory space, which improves the currently known result for an interlaced polynomial lattice rule that requires $O(s\alpha N\log N)$ arithmetic operations with $O(N)$ memory space \cite{G15,GD15}.

\section{Numerical experiments}\label{sec:experiment}
As a low-dimensional problem, let us consider a simple bi-variate test function
\[ f(x,y)=\frac{ye^{xy}}{e-2}, \]
whose exact value of $I(f)$ equals 1. This function has been often used in the literature, see for instance \cite[Chapter~8]{SJbook}. We approximate $I(f)$ by using extrapolated polynomial lattice rules over $\FF_2$ and also by using interlaced polynomial lattice rules over $\FF_2$ for comparison. Here extrapolated polynomial lattice rules are constructed by the fast CBC algorithm as described in Section~\ref{sec_fast} with the constant $C_{\alpha}=1$, which is justified as mentioned in Remark~\ref{rem:walsh_bound}, whereas interlaced polynomial lattice rules are constructed by the fast CBC algorithm based on a computable quality criterion given in \cite[Corollary~3]{G15}. For both the rules, we set $\gamma_1=\gamma_2=1$ within the CBC algorithm.

\begin{figure}[!b]
\centering
\includegraphics[width=0.49\textwidth]{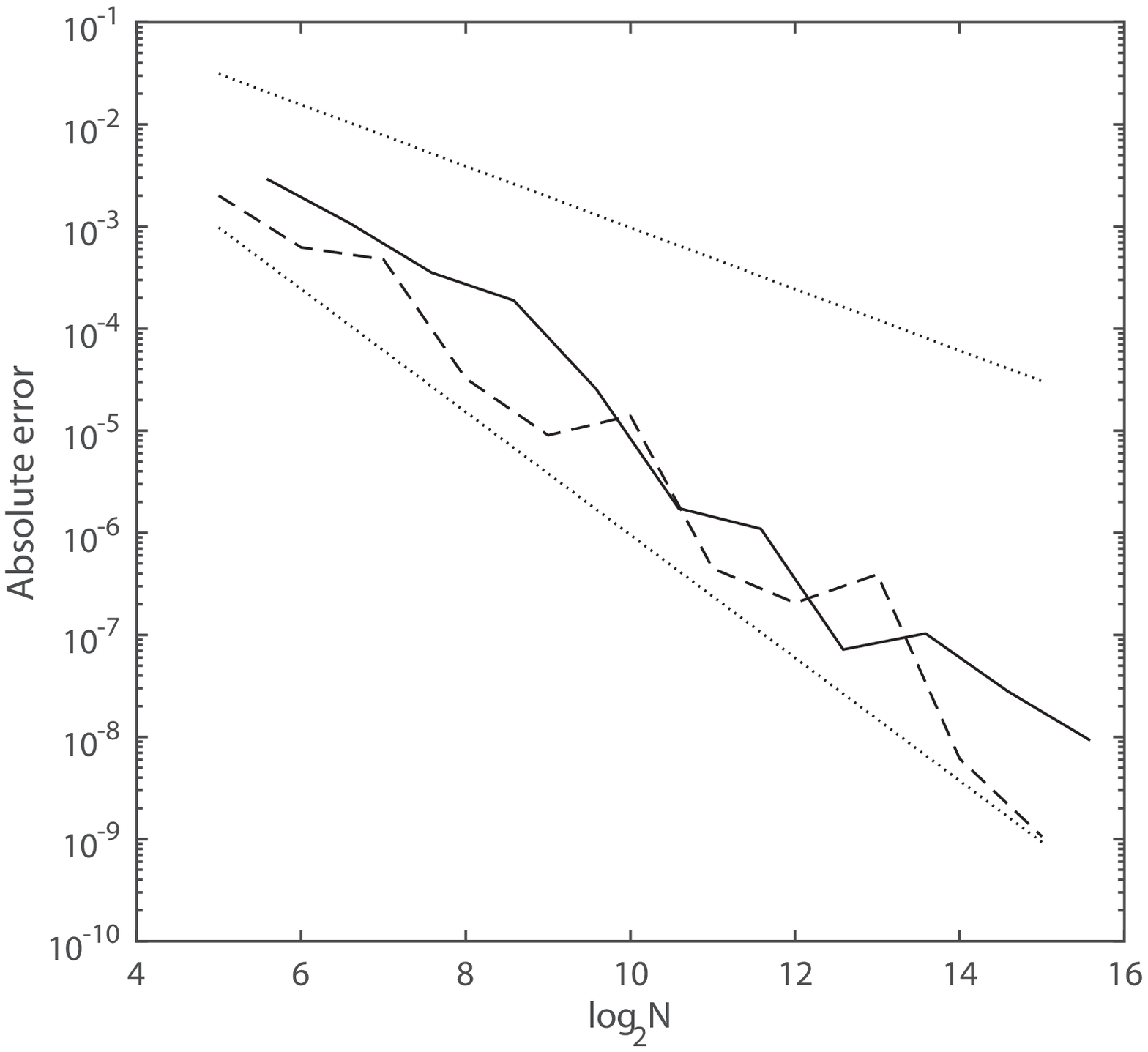}
\includegraphics[width=0.49\textwidth]{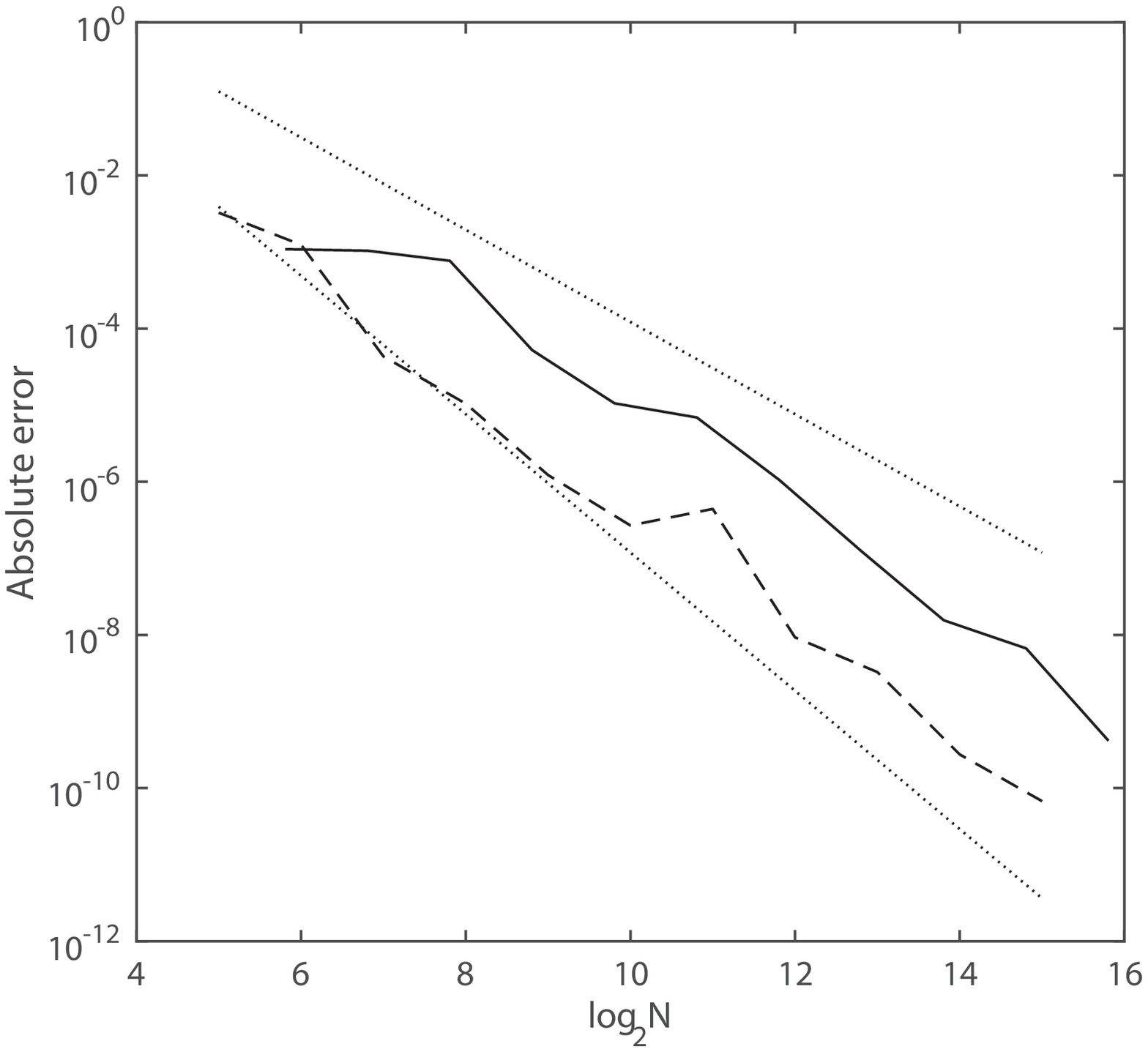}
\caption{The results for $f(x,y)=ye^{xy}/(e-2)$ by using extrapolated polynomial lattice rules (solid) and interlaced polynomial lattice rules (dashed) with $\alpha=2$ (left) and $\alpha=3$ (right).}
\label{fig:test0}
\end{figure}
Figure~\ref{fig:test0} shows the results for the cases $\alpha=2$ (left) and $\alpha=3$ (right). The absolute integration errors as functions of $\log_2 N$ are shown in each graph. The solid lines denote the results for extrapolated polynomial lattice rules and the dashed lines for interlaced polynomial lattice rules. For reference, the dotted lines correspond to $O(N^{-1})$ and $O(N^{-2})$ convergences for $\alpha=2$, and to $O(N^{-2})$ and $O(N^{-3})$ convergences for $\alpha=3$. For the case $\alpha=2$, both the rules perform comparably and achieve approximately the desired rate of the error convergence $O(N^{-2})$. For the case $\alpha=3$, although interlaced polynomial lattice rules outperform extrapolated polynomial lattice rules, we see that the rate of the error convergence for extrapolated polynomial lattice rules asymptotically improves towards the expected $O(N^{-3})$, which supports our theoretical funding.

Next let us consider the following high-dimensional test integrands 
\begin{align*}
 f_1(\bsx) & = \prod_{j=1}^{s}\left[ 1+\gamma_j\left(x_j^{c_1}-\frac{1}{1+c_1}\right)\right] ,\\
 f_2(\bsx) & = \prod_{j=1}^{s}\left[ 1+\frac{\gamma_j}{1+\gamma_j x_j^{c_2}} \right] ,
\end{align*}
for positive constants $c_1,c_2>0$.
Note that the exact values of the integrals for $f_1$ and for $f_2$ with the special cases $c_2=1$ and $c_2=2$ are known. We put $s=100$ and $\gamma_j=j^{-2}$. We construct both extrapolated polynomial lattice rules and interlaced polynomial lattice rules by using the fast CBC algorithm with the same choice of the weights $\gamma_j=j^{-2}$. Note that, in our experiments, we do not observe the phenomenon that the same elements of the generating vector  repeat as pointed out in \cite{GS16}.

\begin{figure}[!p]
\centering
\includegraphics[width=0.49\textwidth]{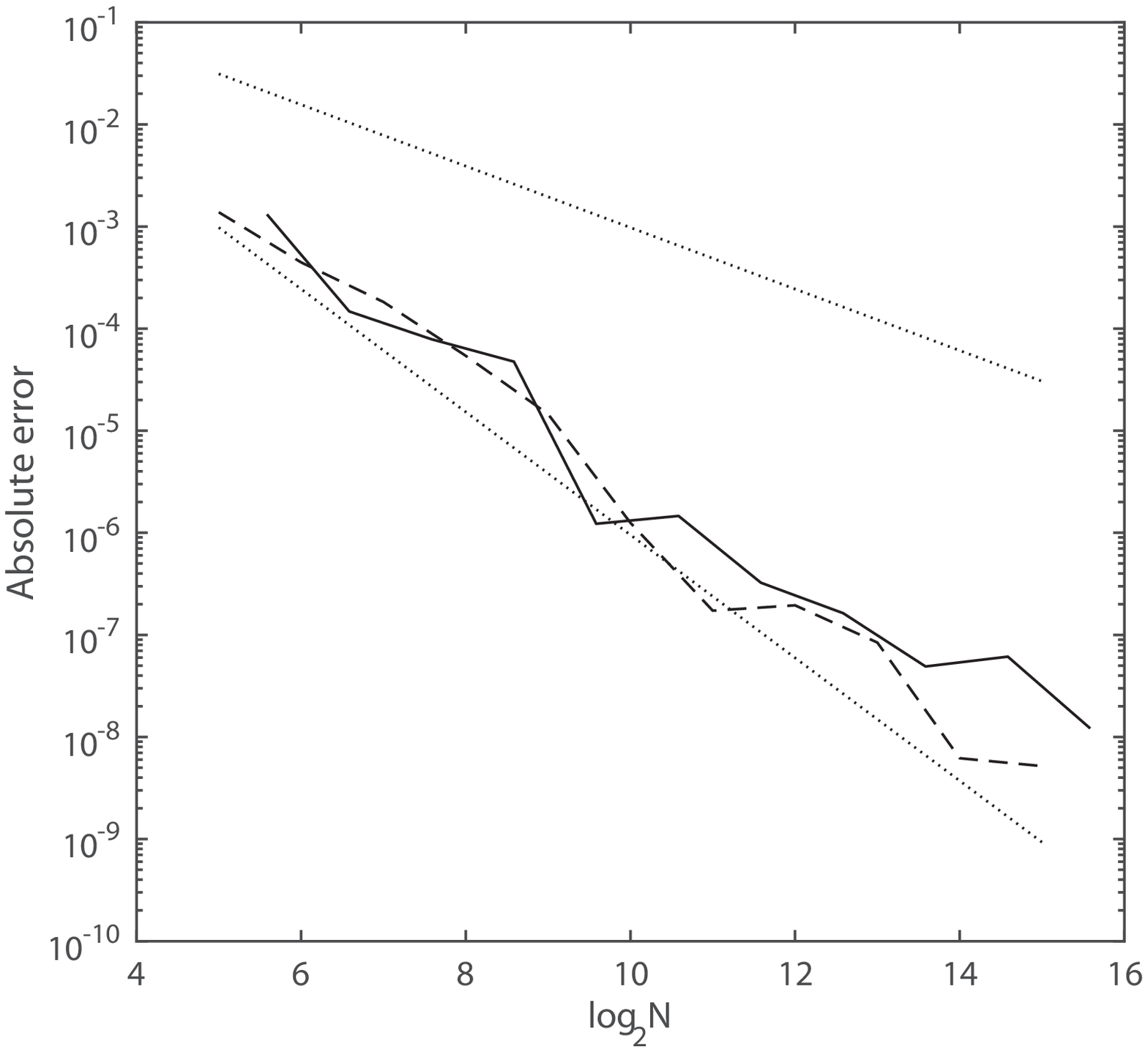}
\includegraphics[width=0.49\textwidth]{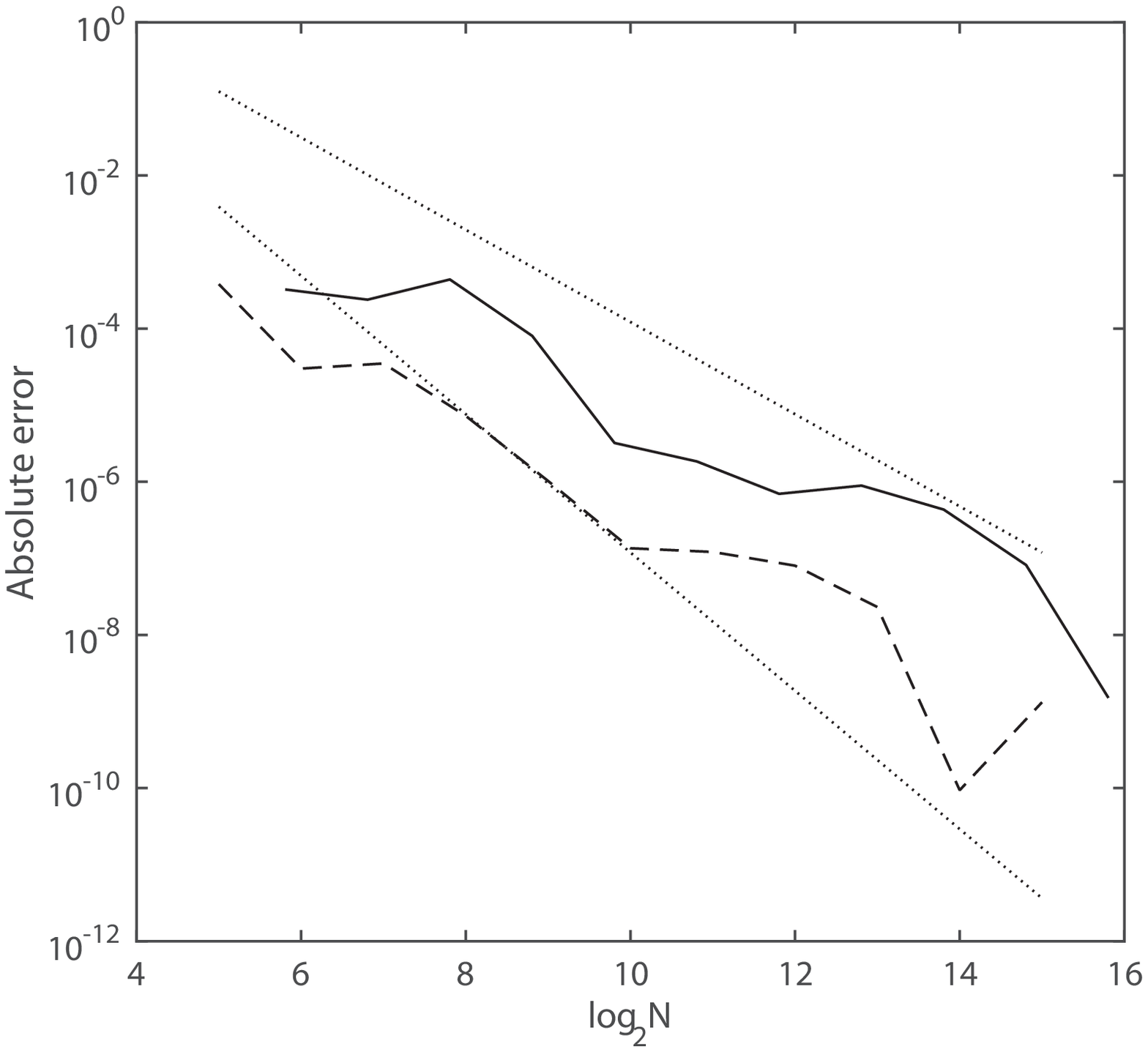}\\
\includegraphics[width=0.49\textwidth]{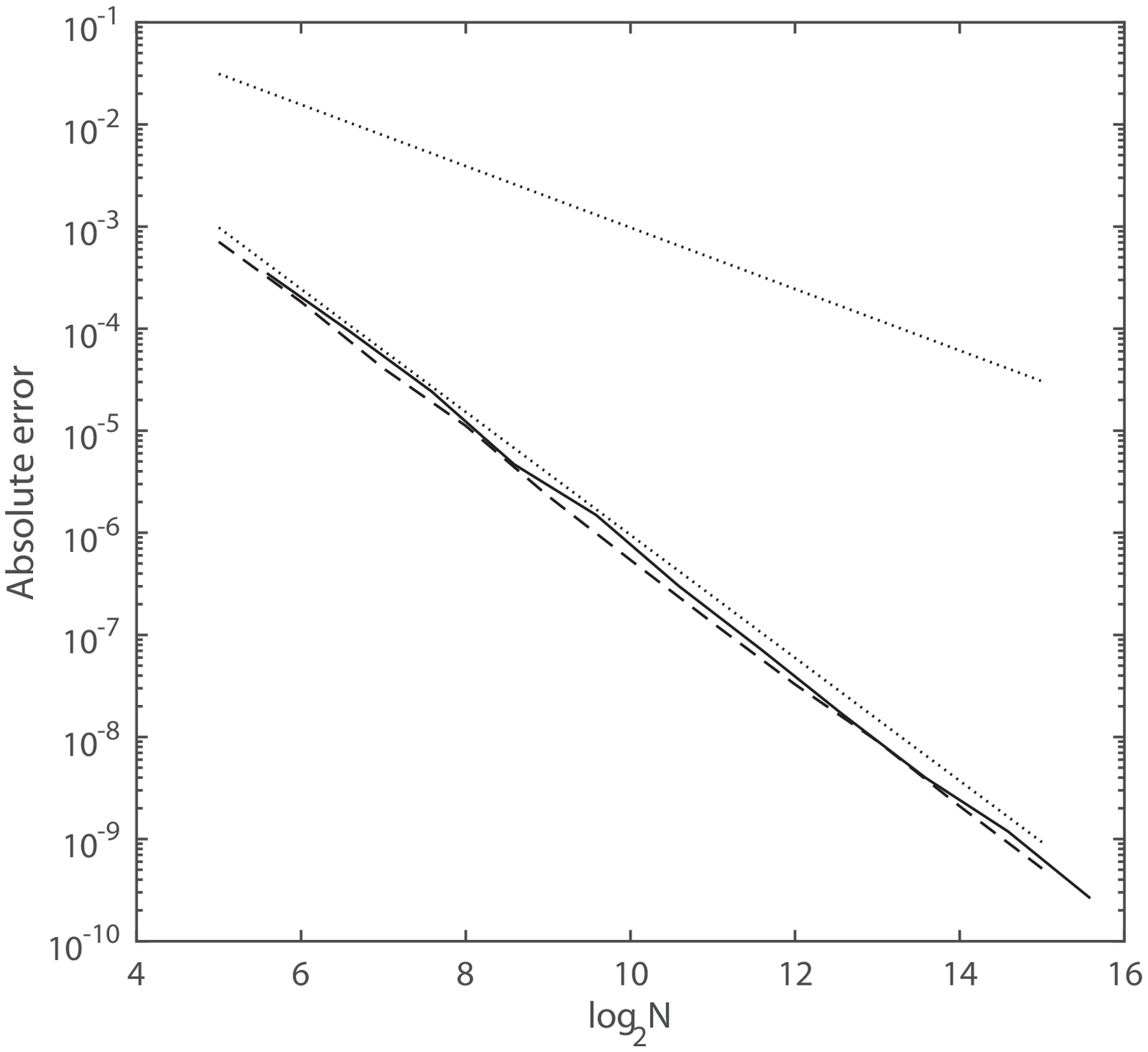}
\includegraphics[width=0.49\textwidth]{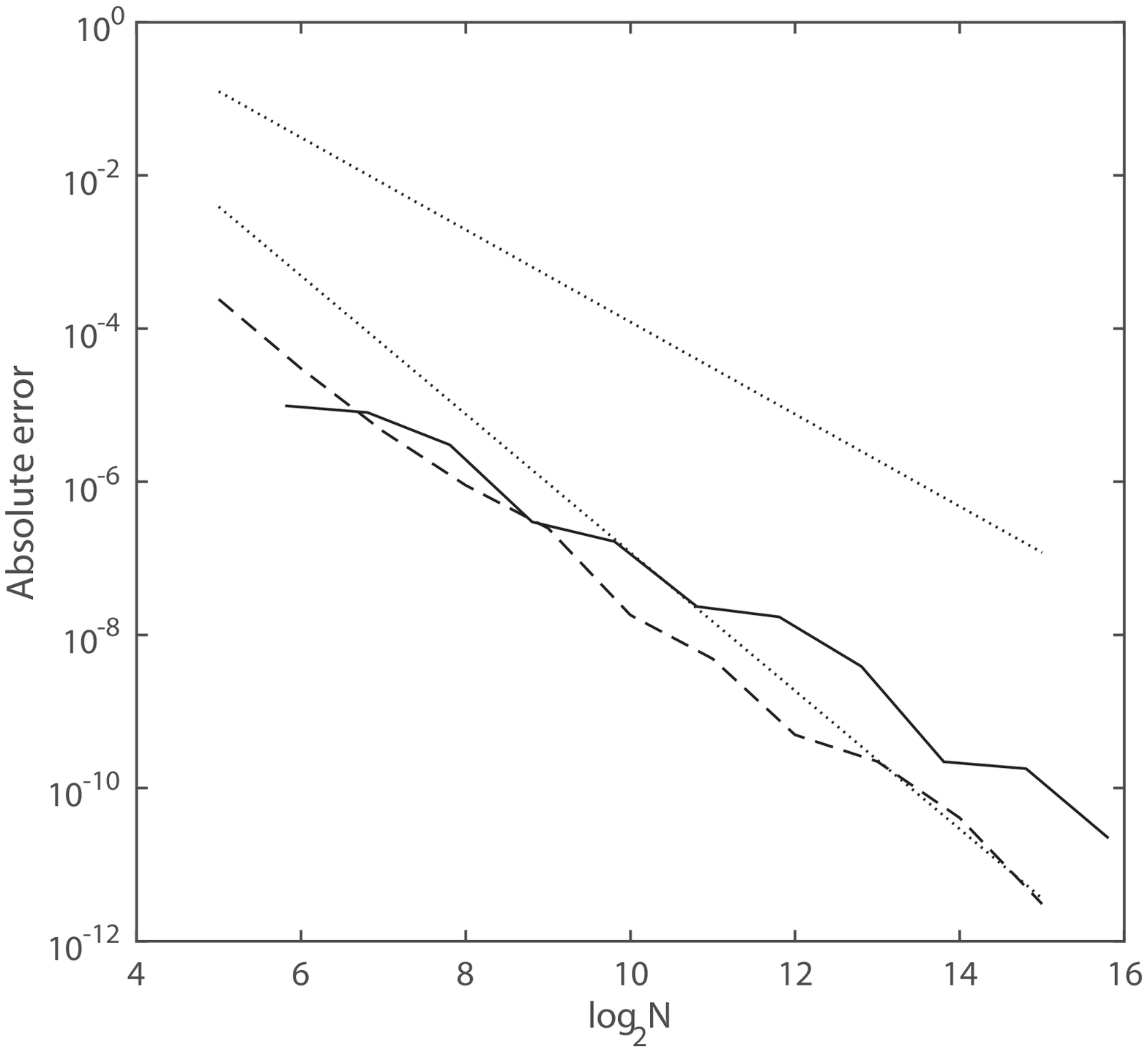}\\
\includegraphics[width=0.49\textwidth]{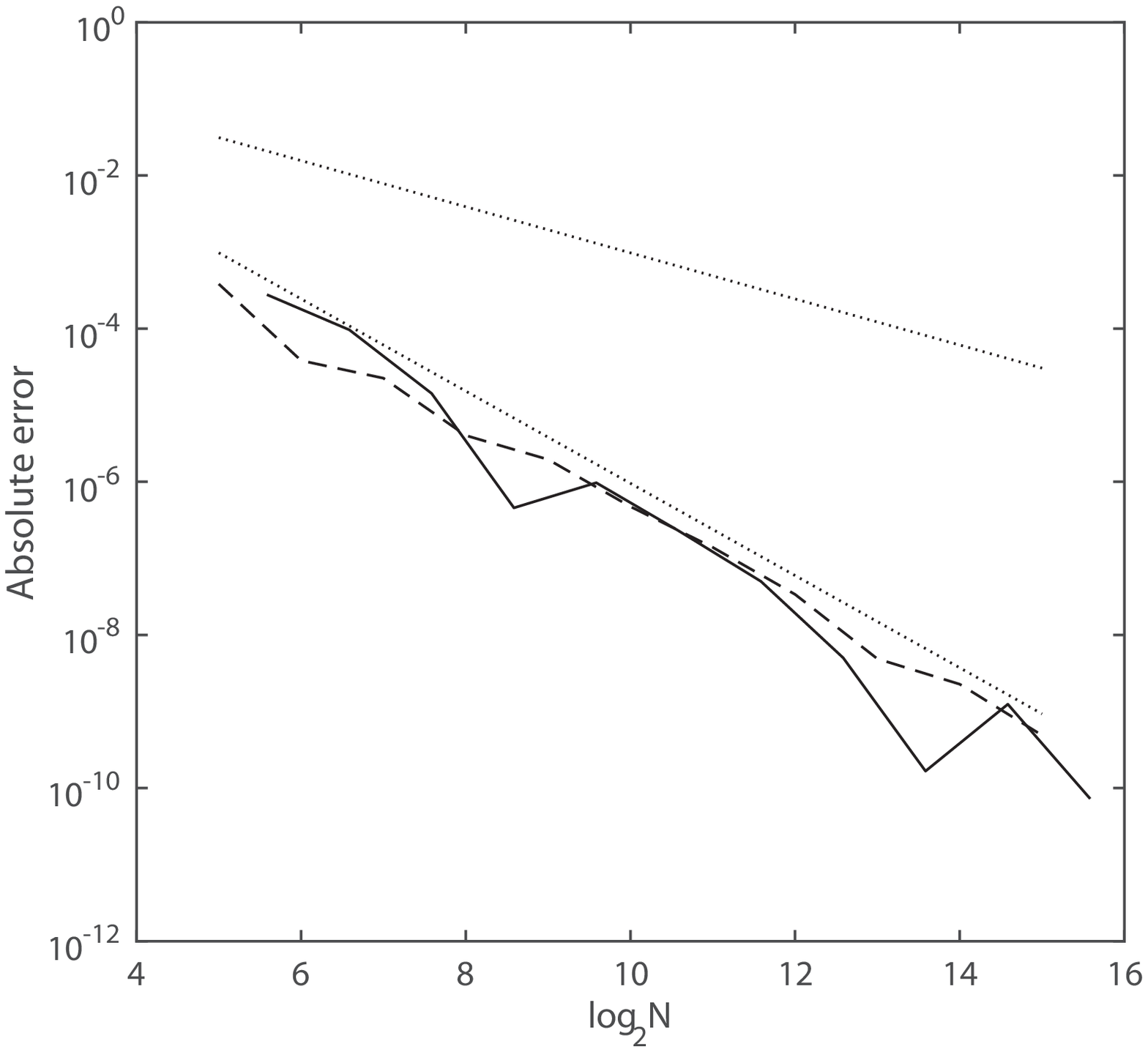}
\includegraphics[width=0.49\textwidth]{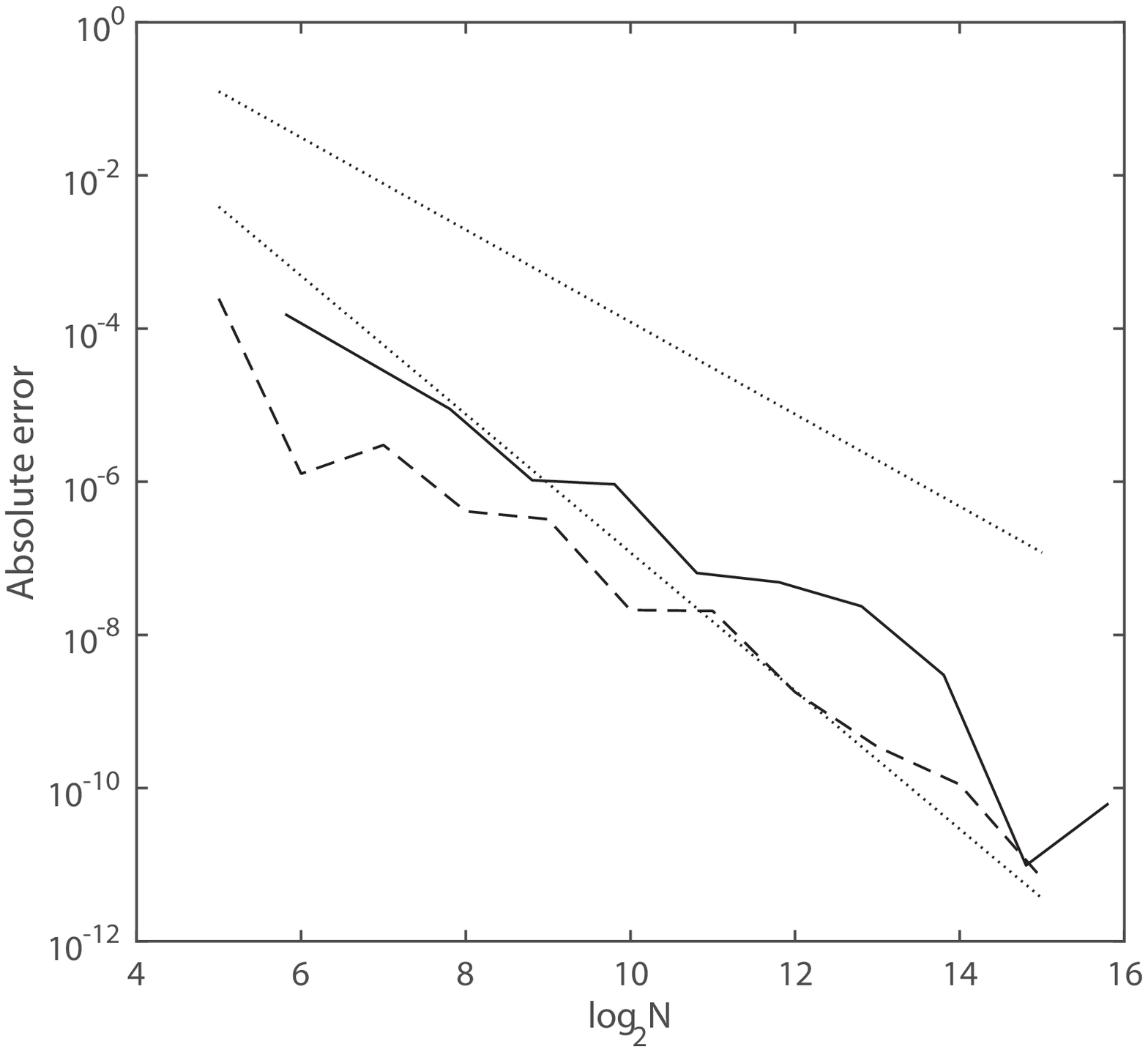}
\caption{The results for $f_1$ with $c_1=1.3$ (top), $f_2$ with $c_2=1$ (middle), and $f_2$ with $c_2=2$ (bottom) by using extrapolated polynomial lattice rules (solid) and interlaced polynomial lattice rules (dashed) with $\alpha=2$ (left) and $\alpha=3$ (right).}
\label{fig:test1}
\end{figure}

Figure~\ref{fig:test1} shows the results for the cases $\alpha=2$ (left column) and $\alpha=3$ (right column). Each row corresponds to the results for $f_1$ with $c_1=1.3$, $f_2$ with $c_2=1$, and $f_2$ with $c_2=2$, respectively. Again, for reference, the dotted lines correspond to $O(N^{-1})$ and $O(N^{-2})$ convergences for $\alpha=2$, and to $O(N^{-2})$ and $O(N^{-3})$ convergences for $\alpha=3$. For the case $\alpha=2$, extrapolated polynomial lattice rules perform competitively with interlaced polynomial lattice rules and achieve approximately the desired rate of the error convergence $O(N^{-2})$. For the case $\alpha=3$, similarly to the result for the bi-variate test function, interlaced polynomial lattice rules outperform extrapolated polynomial lattice rules, but the rate of the error convergence for extrapolated polynomial lattice rules improves as the number of points increases.

These numerical results indicate that extrapolated polynomial lattices rule can be quite useful in fast QMC matrix-vector multiplication with higher order convergence, which shall be undertaken in the near future. 

\section*{Acknowledgments}
The second author would like to thank Professor Josef Dick for his hospitality while visiting the University of New South Wales where most of this research was carried out.


\end{document}